\documentclass[10pt]{amsart}
\usepackage[latin1]{inputenc}
\usepackage{subfigure}
\usepackage{amsmath,amssymb}
\usepackage{enumitem}
\usepackage{units}
\usepackage{graphicx}
\usepackage{a4wide}

\newtheorem{definition}{Definition}[section] 
\newtheorem{proposition}[definition]{Proposition} 
\newtheorem{theorem}[definition]{Theorem} 
 
\newtheorem{lemma}[definition]{Lemma} 
\newtheorem{example}[definition]{Example} 
\newtheorem{corollary}[definition]{Corollary}

\newcommand{\lleft}{( \! (}
\newcommand{\rright}{) \! )}
\newcommand{\mik}{\preceq}

\newcommand{\lk}{\rm link}

\newcommand{\NC}{\mathrm{NC}}
\newcommand{\I}{\mathrm{I}}
\newcommand{\rank}{\mathrm{rank}}
\newcommand{\Abs}{\mathrm{Abs}}

\title[Applications of poset fiber theorems]{A poset fiber theorem for 
doubly Cohen-Macaulay posets and its applications to non-crossing 
partitions and injective words}
\author[M. Kallipoliti]{Myrto Kallipoliti}
\author[M. Kubitzke]{Martina Kubitzke}
\address{Fakult\"at f\"ur Mathematik, Universit\"at Wien, Garnisongasse 3, A-1090 Wien}
\email{myrto.kallipoliti@univie.ac.at, martina.kubitzke@univie.ac.at}

\begin{document}
\maketitle
\thispagestyle{empty}

\begin{abstract}
This paper studies topological properties of the lattices of non-crossing partitions of types A and B 
and of the
poset of injective words. Specifically, it is shown that after the removal of the bottom and top elements (if existent) 
these posets are doubly Cohen-Macaulay. This
strengthens the well-known facts that these posets are
Cohen-Macaulay. Our results rely on a new poset fiber theorem
which turns out to be a useful tool to prove double (homotopy)
Cohen-Macaulayness of a poset. Applications to complexes of
injective words are also included\end{abstract}

\section{Introduction and results}\label{Sec:Intro}
This paper focuses on the study of the topology of 
the lattices of non-crossing partitions of types $A$ and $B$ (denoted by $\NC^A(n)$ and $\NC^B(n)$, respectively) and 
the poset of injective words on $n$ letters (denoted by $\I_n$). 
In addition, we consider 
complexes of injective words, which were originally defined by Jonsson and Welker \cite{JW} and in special cases also by Ragnarsson and 
Tenner \cite{RT1,RT2}, and extend some of the known results for those cell complexes. 
Our results rely on a new technique for showing that a poset, \emph{i.e.}, its order complex, is doubly (homotopy) Cohen-Macaulay.  
Double Cohen-Macaulayness is known to be a topological property \cite[Theorem 9.8]{Walker}, which was originally introduced by Baclawski 
in \cite{Baclawski2}. 
A Cohen-Macaulay complex $\Delta$ is called \emph{doubly Cohen-Macaulay} if for every vertex $v\in\Delta$ the complex $\Delta-\{v\}$ 
is Cohen-Macaulay of the same dimension as $\Delta$. 
Particular interest in this class of complexes partly stems from the fact that those complexes are conjectured to satisfy the $g$-conjecture, see 
\emph{e.g.}, \cite[Problem 4.2]{Swartz} and \cite{Nevo} for partial results. 
There exists a variety of fairly well-studied complexes, \emph{e.g.}, 
homology spheres, reduced order complexes of geometric lattices \cite{Baclawski2}, finite buildings \cite{bjo3} and independence 
complexes of matroids \cite{Chari} that are known to be doubly Cohen-Macaulay.   
The latter three classes of complexes admit so-called \emph{convex ear decompositions} \cite{Chari}. Those decompositions 
were further established by Swartz 
\cite[Theorem 4.1]{Swartz} as maybe the main tool for proving double Cohen-Macaulayness of a complex. 
If one wants to show that a simplicial complex is Cohen-Macaulay, shellability might be considered the analogue of convex ear 
decompositions. Another method for proving (homotopy) Cohen-Macaulayness is provided by the classical poset fiber theorems of 
Baclawski \cite{Baclawski} and Quillen \cite{Q}. 
To the best of our knowledge, there do not exist analogues of these theorems for higher Cohen-Macaulay connectivity. 
We close this gap by providing the following novel poset fiber theorem for doubly homotopy Cohen-Macaulay intervals.
We recall that homotopy Cohen-Macaulayness is an homotopy version of the Cohen-Macaulay property and that every homotopy 
Cohen-Macaulay poset is Cohen-Macaulay. Similarly, doubly homotopy Cohen-Macualayness implies Cohen-Macaulayness.

\begin{theorem}
\label{th:PFT-2HCM-b}
Let $P$ be a graded poset, $I=(u,v)$ be an open interval in $P$ and $x\in I$. 
Assume that $I-\{x\}$ is graded and that $Q$ is a homotopy Cohen-Macaulay poset. 
Let further $f:P\to Q$ be a surjective rank-preserving poset map which satisfies the following conditions:
\begin{enumerate}
\item [(i)] For every $q\in Q$ the fiber $f^{-1}\left(\langle q\rangle\right)$ is homotopy Cohen-Macaulay. 
\item [(ii)] There exists $q_0\in Q$ such that 
\begin{itemize}
\item $f^{-1}(q_0)=\{x\}$ and $f(I)-\{q_0\}$
is homotopy Cohen-Macaulay, and
\item for every $q>q_0$ and $p\in f^{-1}(q)\cap I$ the poset $[u,p] -\{x\}$ is homotopy Cohen-Macaulay.
\end{itemize}
\end{enumerate}
Then $I-\{x\}$ is homotopy Cohen-Macaulay as well.
If for all $x\in I$ there exists a map satisfying the above conditions and if $\rank\left(I-\{x\}\right)=\rank(I)$, 
then $I$ is doubly homotopy Cohen-Macaulay.
\end{theorem}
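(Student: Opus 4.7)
My plan is to reduce the first assertion to the classical homotopy version of Baclawski's poset fiber theorem, applied to the restricted map $\tilde f \colon I-\{x\} \to f(I)-\{q_0\}$ obtained from $f$. Since $f^{-1}(q_0)=\{x\}$, this restriction is well-defined and surjective, and it inherits rank-preservation from $f$ (using that $I-\{x\}$ is graded by assumption). The target poset $f(I)-\{q_0\}$ is homotopy Cohen-Macaulay by the first bullet of (ii), so the remaining task is to verify that for every $q \in f(I)-\{q_0\}$, the fiber $\tilde f^{-1}(\langle q\rangle)$ is homotopy Cohen-Macaulay.

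For this fiber verification I would split on the relation of $q$ to $q_0$. If $q_0 \not\leq q$, then $x \notin f^{-1}(\langle q\rangle)$, so $\tilde f^{-1}(\langle q\rangle)=f^{-1}(\langle q\rangle)\cap I$; homotopy Cohen-Macaulayness of this subposet should follow from condition (i) together with a standard argument that intersecting a homotopy Cohen-Macaulay order ideal with the open interval $I$ preserves the property. If instead $q > q_0$, then the fiber equals $(f^{-1}(\langle q\rangle)\cap I)-\{x\}$, and I would express it as a union of the punctured lower intervals $[u,p]-\{x\}$ ranging over $p \in f^{-1}(q)\cap I$; the second bullet of (ii) then furnishes the homotopy Cohen-Macaulayness of each piece, and a nerve or Mayer-Vietoris type argument, controlled by the homotopy Cohen-Macaulay structure of the lower fibers in (i), assembles the pieces into homotopy Cohen-Macaulayness of the whole fiber.

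Once the hypotheses of the classical fiber theorem are checked, it yields that $I-\{x\}$ is homotopy Cohen-Macaulay. The second conclusion is then immediate: if such a map $f$ exists for every $x \in I$ and $\rank(I-\{x\})=\rank(I)$, then by the first part $I-\{x\}$ is homotopy Cohen-Macaulay of the same rank as $I$ for every $x \in I$, which is precisely the definition of $I$ being doubly homotopy Cohen-Macaulay. The main obstacle is the second case of the fiber verification, namely assembling the punctured intervals $[u,p]-\{x\}$ into a single homotopy Cohen-Macaulay subposet; this is likely to demand an induction on the rank of $q$ above $q_0$ or a refined nerve-type lemma tailored to punctured order ideals, and it is the point where the argument must genuinely go beyond the classical fiber theorems of Baclawski and Quillen.
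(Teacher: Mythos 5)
Your overall plan --- feed the restricted map $\tilde f\colon I-\{x\}\to f(I)-\{q_0\}$ into the classical Quillen/Baclawski fiber theorem --- requires that \emph{every} fiber $\tilde f^{-1}\left(\langle q\rangle\right)$ be homotopy Cohen-Macaulay, and this is exactly what the hypotheses do not provide and what your two sub-arguments fail to deliver. In the case $q_0\not\le q$ you assert that $f^{-1}\left(\langle q\rangle\right)\cap I$ is homotopy Cohen-Macaulay by a ``standard argument'' about intersecting a homotopy Cohen-Macaulay order ideal with an open interval; no such general fact exists (the intersection can fail to be pure, can have badly connected links, and nothing in conditions (i)--(ii) controls it). In the case $q>q_0$ the proposed cover of the fiber by the posets $[u,p]-\{x\}$, $p\in f^{-1}(q)\cap I$, is already problematic as stated: $[u,p]$ contains $u\notin I$, and an element $z$ of the fiber need not lie below \emph{any} $p\in f^{-1}(q)\cap I$, since the maximal elements of $f^{-1}\left(\langle q\rangle\right)$ above $z$ may fail to be $<v$. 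Even granting a correct cover, homotopy Cohen-Macaulayness is not preserved under unions, so the ``nerve or Mayer--Vietoris type argument'' you defer to is precisely the missing proof, and the hypotheses give no control over the pairwise intersections of the pieces.

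The paper's proof is structured to avoid restricted-fiber Cohen-Macaulayness altogether. It applies the weaker connectivity-comparison lemma (Corollary \ref{cor:connected}, extracted from Bj\"orner--Wachs--Welker) to $\tilde f$ only to obtain the top-dimensional connectivity of $\Delta(I-\{x\})$: the required homotopies to constant maps come from the fibers of the \emph{original} map $f$ (condition (i)) and are then restricted, with the constant point chosen inside $\tilde f^{-1}\left(\langle q\rangle\right)$. The links of nonempty faces are handled separately by reducing to open intervals and principal upper and lower order ideals: intervals and lower ideals through $x$ are links of faces of the posets $[u,b]_P-\{x\}$ supplied by the second bullet of condition (ii), while upper ideals $\widetilde I_{>p}$ with $p<x$ are treated by induction on rank, applying the theorem itself to the restriction $\bar f\colon P_{\ge p}\to Q_{\ge f(p)}$ and the interval $(p,v)$. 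This inductive mechanism is entirely absent from your proposal, and without it (or a genuinely new gluing lemma for punctured ideals) the argument does not close. A minor further point: for the last assertion you also need $I$ itself to be homotopy Cohen-Macaulay, which the paper gets from Quillen's theorem applied to $f$.
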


In the above theorem $\langle q\rangle$ denotes the order ideal of $Q$ generated by the singleton $\{q\}$.
As a corollary of the above theorem, we derive a poset fiber theorem that extends 
Quillen's theorem for homotopy Cohen-Macaulay posets \cite[Corollary 9.7]{Q}. 

\begin{corollary}\label{th:PFT-2HCM}
Let $P$ be a graded poset without a minimum and a maximum element and let $x\in P$. Assume that $P-\{x\}$ is graded and that $Q$ 
is a homotopy Cohen-Macaulay poset. 
Let further $f:P\to Q$ be a surjective rank-preserving  poset map which satisfies the following conditions:
\begin{enumerate}
\item [(i)] For every $q\in Q$ the fiber $f^{-1}\left(\langle q\rangle\right)$ is homotopy Cohen-Macaulay. 
\item [(ii)] There exists  $q_0\in Q$ such that 
\begin{itemize}
\item $f^{-1}(q_0)=\{x\}$ and $Q-\{q_0\}$ 
is homotopy Cohen-Macaulay, and
\item for every $q>q_0$ and $p\in f^{-1}(q)$ the poset $\langle p\rangle -\{x\}$ is homotopy Cohen-Macaulay.
\end{itemize}
\end{enumerate}
Then $P-\{x\}$ is homotopy Cohen-Macaulay as well. 
If for all $x\in P$ there exists a map satisfying the above conditions and if $\rank(P-\{x\})=\rank(P)$, 
then $P$ is doubly homotopy Cohen-Macaulay.
\end{corollary}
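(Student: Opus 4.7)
The plan is to deduce Corollary \ref{th:PFT-2HCM} from Theorem \ref{th:PFT-2HCM-b} by adjoining a bottom and a top element to both $P$ and $Q$. I set $\widehat{P} = P \cup \{\widehat{0}_P, \widehat{1}_P\}$ and $\widehat{Q} = Q \cup \{\widehat{0}_Q, \widehat{1}_Q\}$, and extend $f$ to $\widehat{f}\colon \widehat{P} \to \widehat{Q}$ by $\widehat{f}(\widehat{0}_P) = \widehat{0}_Q$ and $\widehat{f}(\widehat{1}_P) = \widehat{1}_Q$. Since $P$ has no minimum or maximum and $f$ is surjective and rank-preserving, $\widehat{P}$ is graded and $\widehat{f}$ is surjective and rank-preserving as well. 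Setting $u = \widehat{0}_P$ and $v = \widehat{1}_P$, the open interval $I = (u,v)$ in $\widehat{P}$ coincides with $P$, and the goal is then to apply Theorem \ref{th:PFT-2HCM-b} to the data $\widehat{P}$, $I$, $x$, $\widehat{Q}$, $\widehat{f}$.

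Next I translate each clause. The gradedness of $I - \{x\} = P - \{x\}$ and the homotopy Cohen-Macaulayness of $\widehat{Q}$ (via its proper part $Q$) are immediate. For $q \in Q$, the fiber $\widehat{f}^{-1}(\langle q \rangle)$ is $f^{-1}(\langle q \rangle_Q) \cup \{\widehat{0}_P\}$, i.e.\ $f^{-1}(\langle q \rangle_Q)$ with a bottom adjoined, and inherits homotopy Cohen-Macaulayness from hypothesis (i) of the corollary; the boundary values $q = \widehat{0}_Q$ and $q = \widehat{1}_Q$ give $\{\widehat{0}_P\}$ (trivially homotopy Cohen-Macaulay as a single point) and $\widehat{P}$, the latter being handled by noting that $P$ is homotopy Cohen-Macaulay via Quillen's classical fiber theorem for homotopy Cohen-Macaulay posets applied to $f$. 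The hypothesis $\widehat{f}^{-1}(q_0) = \{x\}$ together with $\widehat{f}(I) - \{q_0\} = Q - \{q_0\}$ being homotopy Cohen-Macaulay is taken verbatim from the corollary. Lastly, for $q > q_0$ and $p \in \widehat{f}^{-1}(q) \cap I = f^{-1}(q)$, the closed interval $[u, p] = [\widehat{0}_P, p]$ in $\widehat{P}$ has proper part equal to $\{y \in P : y < p\}$, which coincides with the proper part of $\langle p \rangle$; hence $[u, p] - \{x\}$ and $\langle p \rangle - \{x\}$ have the same proper part, and the former is homotopy Cohen-Macaulay whenever the latter is, which holds by hypothesis (ii).

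With all hypotheses verified, Theorem \ref{th:PFT-2HCM-b} yields that $I - \{x\} = P - \{x\}$ is homotopy Cohen-Macaulay, establishing the first half of the corollary. For the second half, the rank condition $\rank(P - \{x\}) = \rank(P)$ is the same as $\rank(I - \{x\}) = \rank(I)$ in $\widehat{P}$, so the second statement of Theorem \ref{th:PFT-2HCM-b} applied to $I = P$ gives that $P$ is doubly homotopy Cohen-Macaulay, as required.

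The main obstacle is purely one of bookkeeping: fixing a consistent convention for homotopy Cohen-Macaulayness of bounded versus unbounded posets and of closed versus open intervals, and making sure that $\widehat{f}^{-1}(\langle q \rangle)$, $\widehat{f}(I)$ and $[u, p]$ inside $\widehat{P}$ correspond correctly to $f^{-1}(\langle q \rangle)$, $Q$ and $\langle p \rangle$ in the statement of the corollary. Once this dictionary is in place, each clause of the corollary translates directly to the corresponding clause of Theorem \ref{th:PFT-2HCM-b}, and no further topological work is needed.
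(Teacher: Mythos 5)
Your proposal is correct and follows essentially the same route as the paper: adjoin $\hat{0}$ and $\hat{1}$ to $P$ and $Q$, extend $f$ accordingly, invoke Quillen's theorem to handle the full fiber $\hat{f}^{-1}(\langle\hat{1}_Q\rangle)=\hat{P}$, and apply Theorem \ref{th:PFT-2HCM-b} to the interval $(\hat{0}_P,\hat{1}_P)$. Your extra bookkeeping (identifying $\hat{f}^{-1}(\langle q\rangle)$ and $[\hat{0}_P,p]-\{x\}$ with the corresponding fibers and ideals of the corollary, up to adjoined cone points) is exactly the translation the paper leaves implicit.
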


We will further give a generalization of Corollary \ref{th:PFT-2HCM} to posets having higher Cohen-Macaulay connectivity, see Proposition \ref{th:PFTk-HCM}. 

The original motivation of Theorem \ref{th:PFT-2HCM-b} comes from the objective to investigate double Cohen-Macaulayness of  
the lattices of non-crossing partitions in type $A$ and $B$ and 
the poset of injective words. Since we were not able to successfully attach this problem using rather evolved techniques as convex ear decompositions 
or classical poset fiber theorems, we needed to develop a new methodology. 

In the past, the lattice of non-crossing partitions of a finite Coxeter group as well as the poset 
of injective words have attracted the attention of a lot of different researchers and are fairly well-studied objects. 

The poset of non-crossing partitions $\NC(W)$ for a finite Coxeter group $W$ has been studied extensively 
and it has been shown to be a graded, self-dual lattice \cite{bessis}. 
In 1980, Bj\"orner and Edelman \cite[Example 2.9]{bjo1} constructed an 
EL-shelling of $\NC^A(n)$ and in 2002, Reiner \cite{R} proved 
the same result for non-crossing partitions of type B. Finally, 
EL-shellability of $\NC(W)$ was verified for all types of finite Coxeter groups 
by Athanasiadis, Brady and Watt \cite[Theorem 1.1]{ABW} who were 
able to provide a case-independent proof. In particular, it follows from this result 
that $\NC(W)$ is homotopy Cohen-Macaulay. 
In personal communication, Athanasiadis proposed to study the 
problem of whether $\NC^A(n)$ and $\NC^B(n)$ are doubly (homotopy) 
Cohen-Macaulay. Using Theorem \ref{th:PFT-2HCM-b} we can give an 
affirmative answer to this question. 
In fact, we provide a uniform proof for both types.

\begin{theorem}\label{th:NC}
The proper parts of the lattices of non-crossing partitions $\NC^A(n)$ and $\NC^B(n)$ are doubly homotopy Cohen-Macaulay for all $n\geq 3$.
\end{theorem}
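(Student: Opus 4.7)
The plan is to apply Theorem \ref{th:PFT-2HCM-b} with $P$ equal to $\NC^A(n)$ (respectively $\NC^B(n)$) and $I=(\hat 0,\hat 1)$ its proper part. For each element $x\in I$, I would construct a rank-preserving, surjective poset map $f=f_x\colon P\to Q$ onto some homotopy Cohen-Macaulay poset $Q$ satisfying conditions (i) and (ii), with $x$ the unique preimage of a distinguished element $q_0\in Q$. Since the rank condition $\rank(I-\{x\})=\rank(I)$ is easy to check (for $n\geq 3$ any single vertex of the proper part can be avoided by a saturated chain), the content of the proof lies entirely in exhibiting a suitable $f_x$.

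The map $f_x$ should be built from the well-known product decomposition of intervals in $\NC(W)$: every closed interval factors as a direct product of smaller non-crossing partition lattices of types $A$ and $B$. Given $x$, the decompositions of $[\hat 0,x]$ and $[x,\hat 1]$ suggest a natural choice of $Q$---for example, a product of smaller NC lattices encoding the position of a partition relative to the block structure of $x$---and a natural map defined via joins and meets with $x$ followed by the product isomorphisms. Since non-crossing partition lattices are EL-shellable by the uniform argument of Athanasiadis, Brady and Watt \cite{ABW}, both $Q$ and the fibers $f_x^{-1}(\langle q\rangle)$ are homotopy Cohen-Macaulay as products of homotopy CM posets, so condition (i) and the first bullet of (ii) should follow directly from the construction.

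The main obstacle is the second bullet of (ii), which requires that $[\hat 0,p]-\{x\}$ be homotopy Cohen-Macaulay for every $p>x$ in the appropriate fiber. Removing a vertex from a Cohen-Macaulay complex is not in general a CM-preserving operation, so this is the step with real content. I would argue by induction on $n$, the base cases $n=3$ being small enough to check directly. The product decomposition of $[\hat 0,p]$ reduces the deletion of $x$ to the deletion of a single element from one factor $\NC^A(m)$ or $\NC^B(m)$ with $m<n$; by the induction hypothesis the proper part of that smaller lattice is doubly homotopy Cohen-Macaulay, so the modified factor remains homotopy CM. A standard product argument for homotopy Cohen-Macaulay posets then delivers the desired conclusion for the whole interval minus $\{x\}$.

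The uniform treatment of types $A$ and $B$ is possible because $\NC^B(n)$ enjoys the same product structure for its intervals (with at most one type-$B$ factor appearing per interval) and satisfies the same EL-shellability result \cite{ABW}. The construction of $f_x$ requires only minor adjustments to handle the presence or absence of a zero block in $x$; once these are made, the verification of the hypotheses of Theorem \ref{th:PFT-2HCM-b} proceeds identically in both types, producing the uniform proof announced in the introduction.
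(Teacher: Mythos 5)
Your overall strategy---apply Theorem \ref{th:PFT-2HCM-b} to a suitable map $f_x$ for each $x$---is the paper's strategy, but the two steps you dismiss as routine are exactly where the content lies, and as written they fail. First, removing the single element $x$ from $[\hat 0,p]$ does \emph{not} reduce to ``the deletion of a single element from one factor'' of the product decomposition: if $[\hat 0,p]\cong L_1\times\cdots\times L_k$, then $x$ corresponds to a tuple $(x_1,\dots,x_k)$, and deleting one tuple from a direct product is not a product with a modified factor (already $(\{\hat 0<\hat 1\}\times\{\hat 0<\hat 1\})-\{(\hat 1,\hat 0)\}$ is a $3$-chain, not such a product). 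So there is nothing for a ``standard product argument'' to act on; one-point deletions from products are precisely the hard point, and the paper devotes a separate result to the only case it needs, namely Theorem \ref{martina} for $(P\times\{\hat 0,\hat 1\})-\{(x,\hat 0)\}$, whose proof requires a union decomposition, ordinal sums, rank selection and the gluing lemma of Welker--Ziegler--\v{Z}ivaljevi\'c, with the double homotopy Cohen-Macaulayness of the smaller poset as input. Second, the first bullet of condition (ii), that $f(I)-\{q_0\}$ is homotopy Cohen-Macaulay, does not ``follow directly from the construction'': $q_0$ is an interior element of $Q$, so this is again an instance of the very removal problem being solved (EL-shellability of $Q$ gives Cohen-Macaulayness of $Q$, not of $Q-\{q_0\}$), and asserting it without argument is essentially circular.

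For comparison, the paper avoids the join/meet interval decomposition altogether. It first uses the Kreweras complement (Lemma \ref{fixpoint}) to reduce to removing an element $x$ with a fixed point, normalized so that $x(n)=n$ while $u(n)\neq n$, and then applies Theorem \ref{th:PFT-2HCM-b} to the letter-deleting map $g\colon\mathcal{J}_n\to\mathcal{J}_{n-1}\times\{\hat 0,\hat 1\}$ from the second author's work on the absolute order, with $q_0=(x,\hat 0)$ so that $g^{-1}(q_0)=\{x\}$; the two removal statements in condition (ii) are then supplied by induction on the rank of the interval together with Theorem \ref{martina}, and the proper part of $\NC^A(n)$ or $\NC^B(n)$ is finally identified with an interval $(e,u)$ one rank up. If you insist on your map built from the block structure of $x$, you would need an analogue of Theorem \ref{martina} for deleting one point from a general product of non-crossing partition lattices, a genuinely stronger statement that your proposal neither formulates nor proves.
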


Maybe of a little bit less interest than the lattices of non-crossing partitions but still of fairly much interest 
is the poset of injective words 
Already in 1978, Farmer \cite{Farmer} showed that the regular CW-complex $\Gamma_{n}$ whose face poset is $\I_{n+1}$ is 
homotopy equivalent to a wedge of spheres of top dimension. 
Some years later, Bj\"orner and Wachs \cite[Theorem 6.1.(i)]{BW2} 
could strengthen this result by demonstrating that the complex 
$\Gamma_n$ is even CL-shellable. More recently, Reiner and Webb \cite{RW} 
computed the homology of $\Gamma_n$ as an $S_{n+1}$-module, 
and Hanlon and Hersh \cite{HW} provided a refinement of this result 
by giving a Hodge type decomposition for the homology of $\Gamma_n$. 
During a discussion, Athanasiadis suggested to investigate the topology of the poset $\I_n-\{\emptyset,x\}$, 
where $\emptyset$ denotes the 
empty word of $\I_n$ and $x\in \I_n$ can be any word different from $\emptyset$.
In this work, using Corollary \ref{th:PFT-2HCM}, we show that the posets 
$\I_n-\{\emptyset,x\}$, 
\textit{i.e.}, their order complexes, are homotopy Cohen-Macaulay. In particular, this yields the following result.

\begin{theorem}\label{th:injective2Constructible}
Let $n\geq 2$ and let $\emptyset\in \I_n$ denote the empty word. Then $\I_n-\{\emptyset\}$ 
is doubly homotopy Cohen-Macaulay.
\end{theorem}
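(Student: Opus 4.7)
The plan is to apply Corollary \ref{th:PFT-2HCM} to $P=\I_n-\{\emptyset\}$. This poset is graded of rank $n-1$ and has neither a minimum (its minima are the $n$ one-letter words) nor a maximum (its maxima are the $n!$ permutations of $[n]$); for $n\geq 2$ every vertex is avoided by some maximal chain, so the condition $\rank(P-\{x\})=\rank(P)$ of the corollary is automatic. What remains is, for each $x\in P$, to exhibit a surjective rank-preserving poset map $f_x\colon P\to Q_x$ into a homotopy Cohen-Macaulay poset $Q_x$ satisfying conditions (i) and (ii).

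For a single-letter word $x=(a)$, I would use the support map $f_x(w):=\{\text{letters of }w\}$ with target $Q_x=B_n-\{\emptyset\}$, the Boolean lattice on $[n]$ without its bottom. This map is rank-preserving and surjective; the target is a cone with apex $[n]$, hence homotopy Cohen-Macaulay; and each principal fiber $f_x^{-1}(\langle S\rangle)=\I_S-\{\emptyset\}$ is homotopy Cohen-Macaulay by the Bj\"orner-Wachs CL-shelling of injective words. Taking $q_0=\{a\}$, we have $f_x^{-1}(q_0)=\{x\}$, and $Q_x-\{q_0\}$ is again a cone with apex $[n]$, hence homotopy Cohen-Macaulay; moreover, for every $S\supsetneq\{a\}$ and every permutation $p$ of $S$ the order ideal $\langle p\rangle-\{x\}$ identifies with the Boolean lattice $B_{|S|}$ minus its bottom and one atom, which is homotopy Cohen-Macaulay since the proper part of $B_{|S|}$ is a sphere, hence doubly Cohen-Macaulay.

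For $x$ with $|x|=k\geq 2$, the support map collapses all $k!$ permutations of $S_x:=\text{supp}(x)$ to a single image, so it cannot isolate $x$ and a more refined target is needed. My plan is to build $Q_x$ by splicing the subword lattice $\langle x\rangle\cong B_k-\{\emptyset\}$ of $P$ (whose top is $x$ itself) into $B_n-\{\emptyset\}$ in place of the single rank-$(k-1)$ element $S_x$, keeping the upper portion of $B_n$ above $S_x$ intact and attached above $x$. The map $f_x$ is defined to be the identity on $\langle x\rangle$, to equal support on words $w$ with $\text{supp}(w)\not\subseteq S_x$, and to route the remaining permutations of $S_x$ and their non-subword supersets into the upper part of $Q_x$ in the unique rank-preserving way compatible with the order. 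With $q_0:=x$ this ensures $f_x^{-1}(q_0)=\{x\}$ and, by construction, $f_x^{-1}(\langle q_0\rangle)=\langle x\rangle$ is a simplex, avoiding the non-pure fibers that a naive ``vertex doubling'' of $S_x$ would produce.

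The main obstacle will be verifying condition (i) for this refined map, namely that every principal fiber $f_x^{-1}(\langle q\rangle)$ and the truncation $Q_x-\{q_0\}$ are homotopy Cohen-Macaulay. Once that is established, the remaining conditions follow along the lines of the single-letter case: the other principal fibers reduce to $\I_T-\{\emptyset\}$ for various $T\subseteq[n]$, and for every $q>q_0$ and $p\in f_x^{-1}(q)$ the order ideal $\langle p\rangle-\{x\}$ sits inside the Boolean lattice $B_{|p|}$ with at most one further element removed, whose homotopy Cohen-Macaulayness again follows from the double Cohen-Macaulayness of $B_{|p|}$. Combining the single-letter and longer cases, Corollary \ref{th:PFT-2HCM} then yields that $\I_n-\{\emptyset\}$ is doubly homotopy Cohen-Macaulay for all $n\geq 2$.
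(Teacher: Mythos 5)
Your single-letter case is essentially workable (modulo the fact that ``a cone, hence homotopy Cohen-Macaulay'' is not by itself a justification -- a cone is homotopy CM only if its links are, which here does hold because the proper part of the Boolean algebra $\mathcal{B}_n$ is doubly homotopy CM). The genuine gap is in the case $|x|=k\geq 2$, which is the heart of the theorem. Your ``splicing'' construction is not well-defined, and in fact no map of the kind you sketch can exist. Concretely, let $w\neq x$ be another permutation of $S_x=\mathrm{supp}(x)$ (it exists since $k\geq 2$); it has the same rank $k-1$ as $x$. Every single letter $a\in S_x$ is a subword of both $x$ and $w$, and by your definition $f_x(a)$ is the corresponding atom of the spliced copy of $\langle x\rangle$. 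Order-preservation then forces $f_x(w)$ to lie above all of these atoms, and in your target the only element of rank $k-1$ above all of them is $q_0=x$ itself (the unique $k$-subset of $[n]$ containing $S_x$ is $S_x$, which you have replaced by the top of the spliced copy). So a rank-preserving poset map must send $w$ to $q_0$, contradicting $f_x^{-1}(q_0)=\{x\}$; ``routing the remaining permutations into the upper part of $Q_x$'' necessarily breaks rank-preservation, which Corollary \ref{th:PFT-2HCM} requires. The same obstruction appears, even more starkly, when $x$ is a maximal word ($S_x=[n]$), a case your plan does not address at all. On top of this, you explicitly defer the verification of condition (i) and of homotopy Cohen-Macaulayness of $Q_x-\{q_0\}$, which is precisely the hard content, so the argument is incomplete even granting the construction.

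For comparison, the paper does not attempt to isolate $x$ inside a Boolean-type target. It argues by induction on $n$: after normalizing $x=12\cdots k$, it uses the map $f:\I_n\to\I_{n-1}\times\{\hat{0},\hat{1}\}$ that deletes the letter $n$ and records whether $n$ occurred, so that $q_0=(x,\hat{0})$ automatically has fiber $\{x\}$ and the fibers are controlled (Example \ref{th:InjectiveWordsConstr}). Homotopy Cohen-Macaulayness of $f(\tilde{\I}_n)-\{q_0\}$ is supplied by the separate technical result Theorem \ref{martina} applied to the inductively doubly homotopy CM poset $\tilde{\I}_{n-1}$; the second bullet of condition (ii) is exactly your Boolean-interval observation (closed intervals of $\I_n$ are Boolean algebras plus Corollary \ref{cor:Boolean1}); and the case of maximal $x$ is handled by a different route, via strong constructibility of $\I_n$ and Lemma \ref{lem:maxconst}. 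You would need analogues of these ingredients -- above all a target in which a long word can be isolated by a rank-preserving poset map, and an argument for the truncated target -- and these are exactly what your proposal is missing.
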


In \cite{JW}, several generalizations and restrictions of the 
CW-complex $\Gamma_n$ are introduced and further investigated. 
Jonsson and Welker associate to a given simplicial complex $\Delta$ 
several so-called \emph{complexes of injective words}, which are subcomplexes 
of $\Gamma_n$ and which depend on a certain poset $P$ and a graph $G$, respectively 
(see Section \ref{subsect:Injective} for the precise definitions). 
It is shown in \cite{JW} that these complexes are Boolean cell complexes. 
Furthermore, using the poset fiber theorems for 
sequentially (homotopy) Cohen-Macaulay posets \cite[Theorem 5.1]{BWW}, 
it is proved that sequentially (homotopy) Cohen-Macaulayness is preserved 
under those constructions, see \cite[Theorem 1.3]{JW}. 
In \cite{RT1,RT2}, Ragnarsson and Tenner considered, what they call, 
\emph{Boolean complexes of Coxeter systems}. Those are complexes of 
injective words in the sense of Jonsson and Welker, 
where the underlying simplicial complex and graph are the full simplex and 
the Coxeter graph of a Coxeter system, respectively. 
Ragnarsson and Tenner show that that these complexes are homotopy equivalent to a 
wedge of top-dimensional spheres and compute the number of spheres appearing in the wedge. 
The first part of this result also follows from \cite{JW}. 

In a conversation with Welker, he raised the question of whether one can use Theorem \ref{th:injective2Constructible} to
show analogues of Jonsson's and his results \cite[Theorem 1.3]{JW}, assuming that the underlying simplicial complex is 
doubly homotopy Cohen-Macaulay. We give the following answer to his question.

\begin{theorem}
\label{th:general}
 Let $\Delta$ be a doubly homotopy Cohen-Macaulay simplicial complex on the vertex set $[n]=\{1,\ldots,n\}$.
\begin{itemize}
 \item[(i)] If $P=([n],\preceq_P)$ is a poset, then the Boolean cell complex $\Gamma(\Delta,P)$ is doubly homotopy Cohen-Macaulay. 
\item[(ii)] If $G=([n],E)$ is a graph on vertex set $[n]$, then the Boolean cell complex $\Gamma/G(\Delta)$ is doubly homotopy Cohen-Macaulay.
\end{itemize}
\end{theorem}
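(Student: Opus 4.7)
My plan is to reduce both parts to Corollary \ref{th:PFT-2HCM}, taking as input the doubly homotopy Cohen-Macaulay property of $\Delta$ together with Theorem \ref{th:injective2Constructible}. Recall from \cite{JW} that the face poset of $\Gamma(\Delta,P)$ consists of injective words $w=w_1\cdots w_k$ on $[n]$ whose underlying set lies in $\Delta$ and whose letter sequence is a linear extension of the induced subposet of $P$, and an analogous description holds for $\Gamma/G(\Delta)$. In either case there is a natural forgetful poset map $\pi$ from this face poset to the face poset of $\Delta$ sending $w$ to its support; this map is surjective and rank-preserving, and its fibers were already analyzed in \cite{JW} in the context of propagating sequential (homotopy) Cohen-Macaulayness.

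Next, fix an arbitrary vertex $x$ of $\Gamma(\Delta,P)$, corresponding to some letter $i\in[n]$, and set $q_0:=\{i\}\in\Delta$. The first task is to verify the hypotheses of Corollary \ref{th:PFT-2HCM} for $\pi$. Each fiber $\pi^{-1}(\langle F\rangle)$ is the face poset of the complex $\Gamma(2^F,P|_F)$ of admissible words supported on the face $F$, and its homotopy Cohen-Macaulayness should follow from the arguments of \cite{JW} applied to the Boolean simplex $2^F$. The preimage $\pi^{-1}(q_0)$ is the singleton $\{x\}$ by construction. Most importantly, the image of the face poset of $\Gamma(\Delta,P)-\{x\}$ under $\pi$ is the face poset of $\Delta-\{i\}$; since $\Delta$ is doubly homotopy Cohen-Macaulay, this image is homotopy Cohen-Macaulay of full rank, which delivers both the rank condition $\rank(\Gamma(\Delta,P)-\{x\})=\rank\Gamma(\Delta,P)$ and the required homotopy Cohen-Macaulayness of the image poset minus $q_0$. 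Part (ii) then proceeds by the identical scheme with $P$ replaced by the graph $G$, since $\pi$ and its fiber structure have a formally identical description in the two settings.

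The main obstacle I anticipate is the remaining hypothesis of Corollary \ref{th:PFT-2HCM}, namely that for every $q=F>q_0$ and every $p\in\pi^{-1}(F)$ the order ideal $\langle p\rangle-\{x\}$ is homotopy Cohen-Macaulay. In the unconstrained situation where $\Delta$ restricted to $F$ is a full simplex this is essentially what Theorem \ref{th:injective2Constructible} asserts for the injective-word poset on $F$, but here the ideal is cut out by the $P$- or $G$-compatibility condition and a direct appeal is unavailable. My plan is to handle this by induction on $|F|$, reapplying Corollary \ref{th:PFT-2HCM} to $\langle p\rangle$ with the same forgetful map restricted to $2^F$, the base case $|F|=2$ reducing to a trivial zero-sphere. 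Packaging this recursion cleanly, so that compatibility with $P$ (respectively $G$) is preserved under all relevant restrictions to subsets containing $i$, will constitute the technical heart of the proof.
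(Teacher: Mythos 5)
Your overall scheme is essentially the paper's: the same forgetful support map, applied via Corollary \ref{th:PFT-2HCM} with $q_0=\{i\}$, surjectivity, rank-preservation and homotopy Cohen-Macaulay fibers taken from \cite[Theorem 1.3]{JW}, the singleton fiber over $q_0$, and the identification of the image of the deleted poset with (the face poset of) $\Delta-\{i\}$, which is homotopy Cohen-Macaulay and pure because $\Delta$ is doubly homotopy Cohen-Macaulay. However, the step you yourself call the ``technical heart'' --- showing that $\langle p\rangle-\{x\}$ is homotopy Cohen-Macaulay for every $F\supsetneq\{i\}$ and $p\in\pi^{-1}(F)$ --- is left as an unexecuted induction on $|F|$, with no specification of how the inner application of Corollary \ref{th:PFT-2HCM} would be set up (what plays the role of $q_0$, why the restricted image posets are homotopy Cohen-Macaulay, what the base case yields). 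As written this is a genuine gap, not merely a deferred routine verification.

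The missing observation is that no induction is needed, because the constraint you worry about (``the ideal is cut out by the $P$- or $G$-compatibility condition'') is inherited automatically inside a principal ideal: if $p=p_1\cdots p_k$ is an injective word compatible with $P$ (respectively, a commutation class compatible with $G$) whose content is a face of $\Delta$, then every subword of $p$ is again compatible and its content is again a face of $\Delta$. Hence $\langle p\rangle$ consists of \emph{all} subwords of $p$ and is isomorphic to the Boolean algebra $\mathcal{B}_{\rank(p)}$; this is exactly the statement that $\Gamma(\Delta,P)$ and $\Gamma/G(\Delta)$ are Boolean cell complexes \cite[Lemma 1.1]{JW}. Since $x$ corresponds to the letter $i$ and $\{i\}\subsetneq F$, the element $x$ is not the top of $\langle p\rangle$, so Corollary \ref{cor:Boolean1} (double homotopy Cohen-Macaulayness of the proper part of a Boolean algebra, via Baclawski's result on geometric lattices) immediately gives that $\langle p\rangle-\{x\}$ is homotopy Cohen-Macaulay. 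This is precisely how the paper closes the argument, for both (i) and (ii); note also that, contrary to your framing, the paper's proof makes no use of Theorem \ref{th:injective2Constructible} at all --- it is a direct application of Corollary \ref{th:PFT-2HCM} to the Jonsson--Welker maps together with the Boolean structure of the closed cells.
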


It is worth noting and in a certain extent astonishing that the proof of this theorem does not use Theorem \ref{th:injective2Constructible}, 
but is a direct application of Corollary \ref{th:PFT-2HCM} 
to the same maps which were used by Jonsson and Welker in \cite{JW} to prove their Theorem 1.3.

The paper is structured as follows. 
Section \ref{subsect:PosetComplex} reviews background on posets and simplicial complexes and most of the
terminology and concepts which have been used in the introduction are explained within this section. 
In Section \ref{subsect:NC}, we recall the definitions and some properties of non-crossing partition lattices, with a special emphasis on 
non-crossing partitions of types A and B. 
Sections \ref{subsect:Injective} and \ref{subsect:CompInj} fulfill the same task for the poset and complexes of injective words, respectively. 
Section \ref{sec:ConstructibilityPosetFiber} focuses on poset fiber theorems.
In the first part, we give the proofs of the poset fiber theorems for doubly homotopy Cohen-Macaulay intervals and posets 
(Theorem \ref{th:PFT-2HCM-b} and Corollary \ref{th:PFT-2HCM}, respectively). 
In the second half of this section, we state and prove a poset fiber theorem (Theorem \ref{th:StrgConstrFiberTheorem}) for 
strongly constructible posets, a notion which was introduced in \cite{ca}. 
Subsequently, we apply this theorem to the poset of injective words, thereby providing a direct proof 
that this poset in strongly constructible.
In Section \ref{sect:Application2}, Theorem \ref{th:PFT-2HCM-b} is employed to prove 
double homotopy Cohen-Macaulayness of the non-crossing partition lattices $\NC^A(n)$ and $\NC^B(n)$ (Theorem \ref{th:NC}). 
In Section \ref{sect:Application1}, we use Corollary \ref{th:PFT-2HCM} to show that $\I_n$ 
is doubly homotopy Cohen-Macaulay (Theorem \ref{th:injective2Constructible}). 
Another application of Corollary \ref{th:PFT-2HCM} is provided by Theorem \ref{th:general}, which is the 
natural extension of Theorem 1.3 in \cite{JW} to doubly homotopy Cohen-Macaulay complexes.


\section{Preliminaries}
\subsection{Partial orders and simplicial complexes} \label{subsect:PosetComplex}
Let $(P,\leq)$ be a finite partially ordered set (poset for short) and let $x,y\in P$. We say that $y$
\emph{covers} $x$ and write $x\to y$, if $x<y$ and if there is
no $z\in P$ such that $x< z< y$. The poset $P$ is called \emph{bounded}, if there exist elements
$\hat{0}$ and $\hat{1}$ such that $\hat{0}\leq x\leq \hat{1}$ for every $x\in P$. 
The \emph{proper part} $\bar{P}$ of a bounded poset $P$ is the subposet obtained after removing $\hat{0}$ and $\hat{1}$, 
\emph{i.e.}, $\bar{P}=P-\{\hat{0},\hat{1}\}$.
A subset $C$ of a poset $P$
is called a \emph{chain}, if any two elements of $C$ are comparable in $P$. 
Throughout this paper, we denote by $\{\hat{0},\hat{1}\}$ the $2$-element chain, with
$\hat{0}<\hat{1}$. The \emph{length} of a (finite)
chain $C$ is equal to
$|C|-1$. We say that $P$ is \emph{graded}, 
if all maximal chains of $P$ have the same length and call this common length the \emph{rank} of $P$, denoted by $\rank(P)$. Moreover, 
assuming that $P$ has a minimum $\hat{0}$, 
there exists a unique function $\rank:P\to \mathbb{N}$, called the \emph{rank function} of $P$, such that
\[\rank(y)=\left\{
\begin{array}{ll}
0, & \mbox{if $y=\hat{0}$}, \\
\rank(x)+1, & \mbox{if $x\to y$}.
\end{array}
\right.\] 
We say that $x$ has \emph{rank} $i$, if $\rank(x)=i$. 
For $x\leq y$ in $P$ we denote by $[x,y]_P$ the closed interval 
$\{z\in P~:~x\leq z\leq y\}$ of $P$, endowed with the partial order induced by $P$. 
For $S\subseteq P$, the \emph{order ideal} of $P$ generated by
$S$ is the subposet $\langle S\rangle_P=\{x\in P~:~x\leq y \mbox{ for some }y\in S\}$. 
We write $\langle y_1,y_2,\dots,y_m \rangle$ for the order ideal of $P$ generated by the set
$\{y_1,y_2,\dots,y_m\}$.
For intervals, as well as for order ideals, we use the convention that the subscript $P$ is omitted, when it is clear from 
the context in which poset $P$ a certain interval or ideal is considered. 
For $x\in P$ we set $P_{<x}=\{p\in P~:~p<x\}$. 
Given two posets $(P,\leq_P)$ and $(Q,\leq_Q)$, a map $f:P\rightarrow Q$ is called a
\emph{poset map} if it is order-preserving, \textit{i.e}., $x\leq_Py$ implies $f(x)\leq_Qf(y)$ for all
$x,y\in P$. If, in addition, $f$ is a bijection with order-preserving inverse, then $f$ is said to be a 
\emph{poset isomorphism}. In this case, the posets $P$
and $Q$ are said to be \emph{isomorphic}, and we write $P\cong Q$. 
Assuming that $P$ and $Q$ are graded, 
a map $f:P\rightarrow Q$ is called \emph{rank-preserving}, if for every $x\in P$, 
the rank of $f(x)$ in $Q$ is equal to the rank of $x$ in $P$, \emph{i.e.}, $\rank(f(x))=\rank(x)$. 
The \emph{dual} of a poset $(P,\leq_P)$ is the poset $(P^{*},\leq_{P^*})$ on the same ground set as $P$ with reversed order relations,
\textit{i.e.}, $x\leq_{P^{*}} y$ if and only if $y\leq_P x$.
A poset $P$ is called \emph{self-dual} if $P\cong P^*$, and it is \emph{locally self-dual} if every closed interval
of $P$ is self-dual. 
The \emph{direct product} of two posets 
$P$ and $Q$ is the poset $P\times Q$ on the set $\{(x,y)~:~x\in P,\, \ y\in Q\}$, 
for which $(x,y)\leq (x',y')$ holds in $P\times Q$, if $x\leq_P x'$ and $y\leq_Q y'$. 
The \emph{ordinal sum} $P\oplus Q$ of $P$ and $Q$ is the poset defined on the disjoint union of $P$ and $Q$ with the order relation 
$x\leq y$, if (i) $x,y\in P$ and $x\leq_P y$, or (ii) $x,y\in Q$ and $x\leq_Q y$, or (iii) $x\in P$ and $y\in Q$.
For more information on partially ordered sets, we refer the reader to \cite[Chapter 3]{St}.

An \emph{abstract simplicial complex} $\Delta$ on a finite vertex set $V$ is a collection
of subsets of $V$ such that $G \in \Delta$ and $F\subseteq G$ imply $F\in\Delta$. 
The elements of $\Delta$ are called \emph{faces}. Inclusionwise maximal and $1$-element faces are called \emph{facets}
and \emph{vertices}, respectively.
The \emph{dimension} of a face $F\in \Delta$ is equal to $|F|-1$ and is denoted by $\dim(F)$.
The \emph{dimension} of $\Delta$ is defined to be the maximum dimension of a face of $\Delta$ and is denoted by $\dim\Delta$.
If all facets of $\Delta$ have the same dimension, then $\Delta$ is called \emph{pure}.
The \emph{link} of a face $F$ of 
$\Delta$ is defined as $\lk_{\Delta}(F)=\{G~:~F\cup G\in\Delta,\;F\cap G=\emptyset\}$. 
A simplicial complex $\Delta$ is 
\emph{homotopy Cohen-Macaulay}, if for all $F\in\Delta$ the link of $F$ is topologically $(\dim
(\lk_{\Delta} (F))-1)$-connected. A pure $d$-dimensional simplicial complex $\Delta$ is \emph{shellable}, if there exists 
a linear order $F_1,\ldots, F_m$ of the facets of $\Delta$ such that $\langle F_i\rangle\cap\langle F_1,\ldots,F_{i-1}\rangle$ is 
generated by a non-empty set of maximal proper faces of $\langle F_i\rangle$ for all $2\leq i\leq m$. Here, $\langle F_i\rangle$ and 
$\langle F_1,\ldots,F_{i-1}\rangle$ denote the simplicial complexes whose faces are subsets of $F_i$ and $F_1,\ldots,F_{i-1}$, respectively.
We recall that the Cohen-Macaulay property is defined in an analogue way, if one replaces the homotopy groups 
with homology groups. Cohen-Macaulayness is a topological property and it is implied by homotopy Cohen-Macaulayness. 
For a $d$-dimensional simplicial complex we have the following hierarchy of properties: 
shellable $\Rightarrow$  homotopy Cohen-Macaulay $\Rightarrow$
homotopy equivalent to a wedge of $d$-dimensional spheres. 
Additional background concerning the topology of simplicial complexes can be found in \cite{bjo2} and \cite{mwa}.

To every poset $P$ one can associate its so-called \emph{order complex} $\Delta(P)$, 
which is an abstract simplicial complex on vertex set $P$ whose $i$-dimensional faces are the chains
of $P$ of length $i$. If $P$ is graded of rank $n$, then the order complex $\Delta(P)$ is pure of dimension $n$. 
If we speak about a topological property of $P$, we mean the corresponding property of $\Delta(P)$. 
Likewise, we say that $P$ is homotopy Cohen-Macaulay and shellable, if $\Delta(P)$ is 
homotopy Cohen-Macaulay and shellable, respectively.

\subsection{Non-crossing partitions} \label{subsect:NC}
Let $W$ be a finite Coxeter group and let $T$ denote the set of all reflections in $W$.
Given $w \in W$, the \emph{absolute length} $\ell_T (w)$ of $w$ is the smallest integer $k$ such that $w$ can be
written as a product of $k$ elements of $T$. 
The \emph{absolute order} $\Abs(W)$ is the partial order $\mik$ on $W$ defined by,
\[ u \mik v \ \ \ \mbox{if and only if} \ \ \ \ell_T(u) \, + \,
\ell_T(u^{-1} v) \, = \, \ell_T (v) \]
for $u, v \in W$. Equivalently, $\preceq$ is the partial order on $W$ with covering relations $w
\to wt$, where $w \in W$ and $t \in T$ are such that $\ell_T (w) <
\ell_T (wt)$. The poset $\Abs(W)$ is 
graded with a minimum element $e$ and rank function $\ell_T$, see \emph{e.g.}, \cite{arm, bessis}.
If $c$ is a Coxeter element of $W$, then the interval
\begin{equation*}
\NC(W,c)=[e,c]=\{w\in W~:~e\leq_T w\leq_T c\}
\end{equation*}
is called the lattice of \emph{non-crossing partitions}. 
It is well-known (see \emph{e.g.}, \cite[Section 2.6]{arm}) that for Coxeter elements $c,c'\in W$ it holds
that $\NC(W,c)\cong \NC(W,c')$. We therefore often suppress $c$ from the notation and write $\NC(W)$ instead.
It follows from \cite[Lemma 2.5.4]{arm} that $\Abs(W)$ is locally self-dual for every finite Coxeter group $W$. 
As a consequence, the following corollary holds. 

\begin{corollary}\label{cor:selfDual}
Let $W$ be a finite Coxeter group with set of reflections $T$. Then, for all $u\in P$ the principal 
lower order ideal $\langle u\rangle$ is self-dual. In particular, $\NC(W)$ is self-dual. 
\end{corollary}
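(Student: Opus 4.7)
The proof should be a very short deduction from the fact cited just before the corollary, namely that $\Abs(W)$ is locally self-dual (\cite[Lemma 2.5.4]{arm}). The key observation is that since $\Abs(W)$ has a minimum element $e$, every principal lower order ideal is actually a closed interval: for $u \in \Abs(W)$ one has
\[
\langle u\rangle \;=\; \{w\in \Abs(W) : w \preceq u\} \;=\; [e,u].
\]
Local self-duality means that every closed interval of $\Abs(W)$ is self-dual; applying this to $[e,u]$ immediately gives that $\langle u\rangle$ is self-dual, which is the first assertion.

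For the ``in particular'' part, I would simply note that by definition $\NC(W) = \NC(W,c) = [e,c]$ for a Coxeter element $c$. Thus $\NC(W)$ is itself a closed interval in $\Abs(W)$, and is therefore self-dual by the same application of local self-duality (equivalently, it is the principal order ideal $\langle c\rangle$, so the first assertion specializes to it).

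There is essentially no obstacle here: the whole corollary is a routine unpacking of the definitions of ``locally self-dual'', ``principal order ideal'', and ``non-crossing partition lattice'', combined with the single nontrivial input \cite[Lemma 2.5.4]{arm}. The only small thing to verify is the identification $\langle u\rangle = [e,u]$, which follows directly from the existence of the minimum element $e$ and the fact that the partial order on $\Abs(W)$ is the restriction of itself to the set of elements below $u$.
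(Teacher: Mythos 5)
Your proof is correct and is exactly the argument the paper intends: the corollary is stated as an immediate consequence of the local self-duality of $\Abs(W)$ from \cite[Lemma 2.5.4]{arm} (the Kreweras complement anti-automorphism), combined with the identification $\langle u\rangle=[e,u]$ coming from the minimum element $e$, with $\NC(W)=[e,c]$ as the special case.
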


In the following two paragraphs, we give a more detailed description of the lattices of non-crossing partitions for the 
symmetric group $S_n$ and the hyperoctahedral group $B_n$.

\subsubsection{Non-crossing partitions of type A} \label{subsect:NCA}
Let $W$ be the symmetric group $S_n$. We view this group as the group of permutations of the set $\{1,2,\dots,n\}$. 
The set of reflections $T$ consists of 
all transpositions $(ij)$ for $1\leq i<j\leq n$, and the Coxeter elements of $S_n$ are the $n$-cycles of $S_n$. 
The absolute length of an element of $S_n$ equals $n$ minus the number
of cycles in its cycle decomposition. This in particular means that $\Abs(S_n)$ has rank $n-1$. 
In \cite[Section 2]{Brady} the following description of the absolute order was provided: 
For all $u,v\in S_n$, we have $u\leq_T v$ if and only if 
\begin{itemize}
\item[(i)] every cycle in the cycle decomposition of $u$ can be obtained from some cycle in the cycle decomposition
of $v$ by deleting elements, and
\item[(ii)] any two cycles $a$ and $b$ of $u$, which are obtained from the same cycle $c$ of $v$, are non-crossing with respect to $c$.
\end{itemize}
Here, disjoint cycles $a$ and $b$ are called \emph{non-crossing} with respect to $c$, if there does not exist a cycle $(ijkl)$ which
is obtained from $c$ by deleting elements such that $i,k$  and $j,l$ are elements of $a$ and $b$, respectively.

Consider the Coxeter element $c=(12\cdots n)$. We denote by $\NC^A(n)$ the poset of non-crossing partitions of $S_n$ associated to $c$, 
and we call its elements \emph{non-crossing partitions of type A}. 
Figure \ref{a34} illustrates the Hasse diagrams of the posets $\NC^A(3)$ and $\NC^A(4)$.

\begin{figure}[h]
\begin{center}
\includegraphics[width=4in]{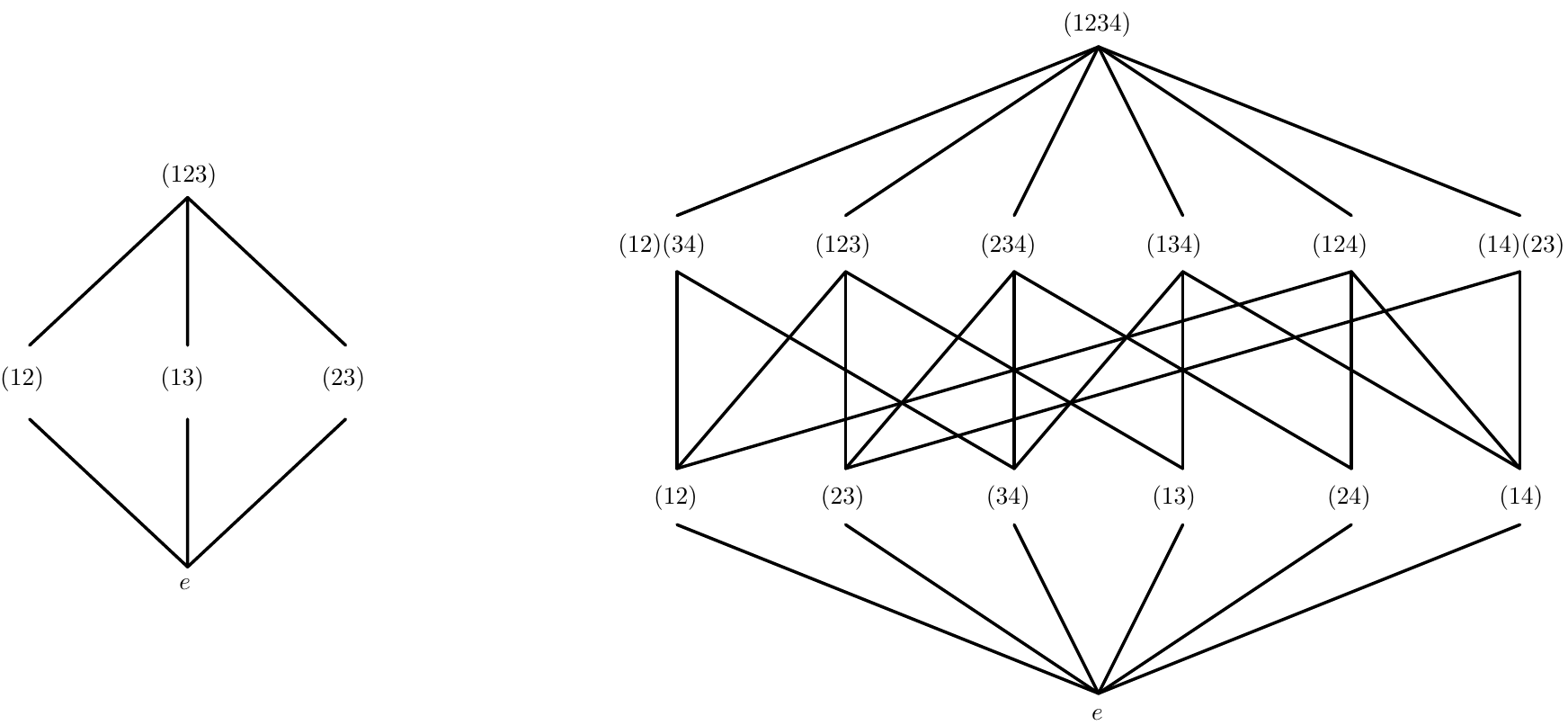}
\end{center}
\caption{The posets $\NC^A(3)$ and $\NC^A(4)$.}
\label{a34}
\end{figure}

\subsubsection{Non-crossing partitions of type B} \label{subsect:NCB}
Let $W$ be the hyperoctahedral group $B_n$. This group can be thought of as the group of signed permutations of the set 
$\{1,2,\dots,n\}$. 
These are permutations $\tau$ of $\{\pm1\pm2,\dots,\pm n\}$, subject to the condition, that $\tau(-i)=-\tau(i)$ for all $1\leq i\leq n$. 
For signed permutations, two types of cycles are usually distinguished. Cycles of the form $(a_1 a_2\cdots a_k)(-a_1-a_2\cdots -a_k)$ 
are called \emph{paired $k$-cycles} and denoted by $\lleft a_1, a_2,\dots, a_k\rright$. Cycles of the form $(a_1a_2\cdots a_k -a_1 -a_2\cdots -a_k)$ 
are referred to as \emph{balanced $k$-cycles} and abbreviated by $[a_1, a_2,\dots, a_k]$. 
The set of all reflections of $B_n$ consists of all balanced $1$-cycles  
$[i]$ for $1\leq i\leq n$ and the paired $2$-cycles $\lleft i,\pm j\rright$ for $1\leq i<j\leq n$. 
The Coxeter elements of $B_n$ are the balanced $n$-cycles of $B_n$.
The absolute length of an element of $B_n$ equals $n$ minus the number
of paired cycles in its cycle decomposition. This in particular means that $\Abs(B_n)$ has rank $n$. 
Covering relations $w\to wt$ in $\Abs(B_n)$, where 
$w$ and $t$ are non-disjoint cycles, can be described by an explicit set of conditions (see \emph{e.g.}, \cite[Section 2.2]{Myrto}).

Consider the Coxeter element $c=[1,2,\dots, n]$. We denote by $\NC^B(n)$ the poset of non-crossing partitions 
of $B_n$, associated to $c$, 
and we call its elements \emph{non-crossing partitions of type B}.  
Figure \ref{s34} illustrates the Hasse diagram of the poset $\NC^B(2)$.

\begin{figure}[h]
\begin{center}
\includegraphics[width=1.3in]{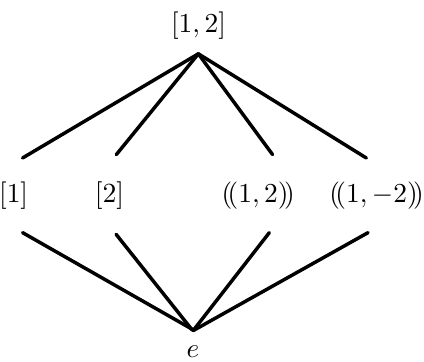}
\end{center}
\caption{The poset $\NC^B(2)$.}
\label{s34}
\end{figure}

For further information on Coxeter groups and non-crossing partitions, we refer to \cite{arm}.

\subsection{The poset of injective words} \label{subsect:Injective}
A word $\omega$ over a finite alphabet $A$ is called \emph{injective}, if no letter
appears more than once. We denote by $\I_n$ the set
of injective words on $[n]=\{1,\ldots,n\}$. The order relation on $\I_n$ is given by the containment of subwords, \textit{i.e.},
$\omega_1\cdots \omega_s \leq\sigma_1\cdots \sigma_r$, if and only if there exist $1\leq i_1<i_2<\cdots <i_s\leq r$ such 
that $\omega_j=\sigma_{i_j}$ for $1\leq j\leq s$. 
\textit{E.g.}, we have $124< 12345$ in $\I_5$, whereas $12$ and $23$ are incomparable in $\I_n$ for $n\geq 3$. 
It is a rather classical result that $\I_n$ is shellable and in particular homotopy Cohen-Macaulay \cite[Theorem 6.1.(i)]{BW2}. 
Moreover, every closed interval of $\I_n$ is isomorphic to a Boolean algebra \cite{Farmer} and in particular shellable. 
Figure \ref{i23} illustrates the Hasse diagrams of the posets $\I_2$ and $\I_3$. 
\begin{figure}[h]
\begin{center}
\includegraphics[width=3.5in]{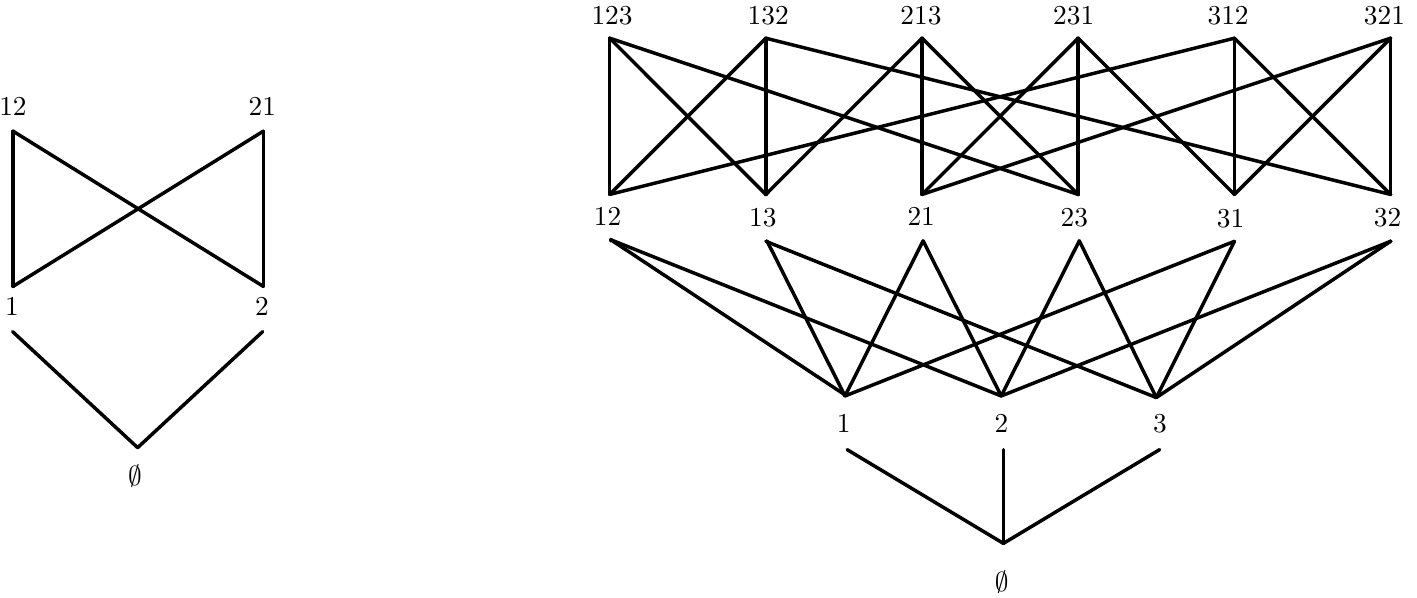}
\end{center}
\caption{The posets $\I_2$ and $\I_3$.}
\label{i23}
\end{figure}

\subsection{Complexes of injective words} \label{subsect:CompInj}
It is a well-known fact that $\I_{n}$ is the face poset of a Boolean cell complex \cite{Farmer}. So as to distinguish between the poset of injective words and the 
corresponding cell complex, we adapt the notations from \cite{JW} and use $\Gamma_{n}$ to denote the complex determined by $\I_{n+1}$. 
Each $d$-cell of $\Gamma_n$ corresponds to an injective word $w$ of length $d+1$ and the faces of such a cell are given by the subwords of $w$. 
As mentioned in Section \ref{Sec:Intro}, Jonsson and Welker \cite{JW} and in a more restricted setting also 
Ragnarsson and Tenner \cite{RT1,RT2}, considered several generalizations of the complex $\Gamma_n$. We now provide the constructions 
of those complexes. 
To simplify notation, for a word $w=w_1\cdots w_s\in \I_n$, we set $c(w)=\{w_1,\ldots,w_s\}$ and call this the \emph{content} of $w$. 

\begin{definition}
 Let $\Delta$ be a simplicial complex on vertex set $[n+1]$.
\begin{itemize}
 \item[(i)] The complex $\Gamma(\Delta)$ is the restriction of $\Gamma_n$ to words whose content is a face of $\Delta$, \emph{i.e.},
\begin{equation*}
 \Gamma(\Delta)=\{w\in \Gamma_n~ :~ c(w)\in \Delta\}.
\end{equation*}
\item[(ii)] Let $P=([n+1],\leq_P)$ be a poset on ground set $[n+1]$. The complex $\Gamma(\Delta,P)$ is the subcomplex of 
$\Gamma(\Delta)$ satisfying the 
following condition:
\begin{equation*}
 w=w_1\cdots w_s\in \Gamma(\Delta,P) \mbox{ and } w_i<_P w_j \;\Rightarrow\; i<j.
\end{equation*}
\item[(iii)] Let $G=([n+1],E)$ be a graph on vertex set $[n+1]$ with edge set $E$. 
The equivalence class $[w]$ of an injective word $w\in \Gamma_n$ 
contains all words $v$ that can be obtained from $w$ by applying a sequence 
of commutations $ss'\rightarrow s's$ such that $\{s,s'\}\notin E$. 
The set of equivalence classes $[w]$ of injective words $w\in \Gamma(\Delta)$ is denoted by $\Gamma/G(\Delta)$. 
An ordering on $\Gamma/G(\Delta)$ is defined 
by setting $[v]\preceq [w]$, if there exist representatives $v'\in[v]$ and $w'\in[w]$ such that $v'\leq w'$ in $\I_{n+1}$.
\end{itemize}
\end{definition}

It directly follows from the definitions that $\Gamma(\Delta,P)$ 
is a subcomplex of $\Gamma(\Delta)$ and these two complexes coincide, if $P$ is an antichain.
If, in contrast, $P$ is a total order, then it holds that $\Gamma(\Delta,P)\cong \Delta$. 
It is shown in \cite{JW} that all three complexes, $\Gamma(\Delta)$, $\Gamma(\Delta,P)$ and $\Gamma/G(\Delta)$, are Boolean cell complexes. 
Furthermore, if $\Delta$ is shellable and $G$ is a simple graph, then (sequentially) homotopy 
Cohen-Macaulayness and shellability are maintained after performing any of those 
constructions \cite[Theorem 1.3, Theorem 1.2]{JW}. 
In the special case of a full simplex $\Delta$ and the Coxeter graph $G$ 
of a Coxeter system, shellability also follows from Remark 5.11 in \cite{RT1}.


\section{Poset fiber theorems} \label{sec:ConstructibilityPosetFiber}

In this section, we focus on the proofs of the poset fiber theorems for doubly homotopy Cohen-Macaulay intervals and posets. 
Furthermore, we state and prove a poset fiber theorem for strongly constructible posets and give an application to injective words. 
These theorems are inspired by the following classical poset fiber theorem of Quillen. 

\begin{theorem} \emph{\cite[Corollary 9.7]{Q}}
\label{Quillen}
 Let $P$ and $Q$ be graded posets. Let further $f:P\to Q$ be a surjective rank-preserving poset map.
Assume that for every $q\in Q$ the fiber $f^{-1}\left(\langle q\rangle\right)$ is homotopy Cohen-Macaulay. 
If $Q$ is homotopy Cohen-Macaulay, then so is $P$.
\end{theorem}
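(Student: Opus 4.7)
The approach I would take mimics Quillen's original fiber argument: cover $\Delta(P)$ by fiber subcomplexes and apply a nerve-type gluing lemma for homotopy Cohen-Macaulay complexes. The proof proceeds by induction on $n := \rank(Q)$. The base case $n = 0$ is immediate: $Q = \{q\}$, surjectivity forces $P = f^{-1}(\langle q \rangle)$, and this fiber is homotopy Cohen-Macaulay by hypothesis.

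For the inductive step, consider the covering $\{\Delta(f^{-1}(\langle q \rangle)) : q \in Q\}$ of $\Delta(P)$. Each piece is homotopy Cohen-Macaulay of rank $\rank(q)$ by hypothesis, and the combinatorics of pairwise intersections is governed by the order structure of $Q$ via the identity $f^{-1}(\langle q_1 \rangle) \cap f^{-1}(\langle q_2 \rangle) = f^{-1}(\langle q_1 \rangle \cap \langle q_2 \rangle)$. Since $Q$ is itself homotopy Cohen-Macaulay, a gluing lemma of the type used in the Bj\"orner-Wachs-Welker homotopy colimit framework assembles these pieces to yield the required $(n-1)$-connectivity of $\Delta(P)$ itself.

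To upgrade bare connectivity to the full homotopy Cohen-Macaulay property, one treats each link inductively. The link of a chain $x_0 < \cdots < x_k$ in $\Delta(P)$ decomposes as a join of order complexes of the open intervals $(x_i, x_{i+1})$ together with the two end pieces $P_{<x_0}$ and $P_{>x_k}$. Restricting $f$ to each such subposet yields a rank-preserving map onto the corresponding open interval of $Q$, which is homotopy Cohen-Macaulay since $Q$ is. Provided the fiber hypothesis descends along these restrictions, the inductive hypothesis applies at strictly smaller rank, each join factor is homotopy Cohen-Macaulay of the correct rank, and so the join has the connectivity demanded by the definition of homotopy Cohen-Macaulayness.

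The principal obstacle is the intersection bookkeeping in the gluing step. If $Q$ is not a meet-semilattice, then $\langle q_1 \rangle \cap \cdots \cap \langle q_k \rangle$ need not be principal, and the connectivity of the corresponding iterated-intersection fiber is not directly given by the hypothesis. The cleanest way around this is to reformulate the covering as a homotopy colimit indexed by $Q$ and appeal to the Bj\"orner-Wachs-Welker gluing machinery, which requires only the homotopy Cohen-Macaulayness of each single-index fiber $f^{-1}(\langle q \rangle)$ together with that of $Q$, and handles the higher-order intersections automatically via the nerve of the diagram, which is $\Delta(Q)$.
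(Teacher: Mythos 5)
First, a point of orientation: the paper does not prove this statement at all --- it is quoted as Quillen's classical result \cite[Corollary 9.7]{Q} --- so there is no in-paper proof to match; the closest in-paper material is Corollary \ref{cor:connected} and the proof of Theorem \ref{th:PFT-2HCM-b}, which \emph{uses} the present theorem as input. Your global connectivity step is the right idea, but the phrase ``handles the higher-order intersections automatically'' hides the one verification where the hypotheses actually enter: to apply the Bj\"orner--Wachs--Welker machinery (in the form of Corollary \ref{cor:connected}, or of their wedge decomposition) you must check that each inclusion $\Delta\left(f^{-1}(Q_{<q})\right)\hookrightarrow\Delta\left(f^{-1}\left(\langle q\rangle\right)\right)$ is homotopic to a constant map. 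This is exactly where rank-preservation is used: $\Delta\left(f^{-1}(Q_{<q})\right)$ has dimension at most $\rank(q)-1$, while the fiber, being homotopy Cohen--Macaulay of rank $\rank(q)$, is $(\rank(q)-1)$-connected, so the inclusion is null-homotopic. With that added, the $(n-1)$-connectivity of $\Delta(P)$ follows as you claim.

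The genuine gap is in the link step. You propose to treat every join factor $\Delta(P_{<x_0})$, $\Delta\left((x_i,x_{i+1})_P\right)$, $\Delta(P_{>x_k})$ by restricting $f$ and inducting, ``provided the fiber hypothesis descends''. For the lower-ideal and interval factors it does not descend in any evident way: the restriction of $f$ to $(x,y)_P$ need not surject onto $(f(x),f(y))_Q$, and its fibers $f^{-1}\left(\langle q\rangle\right)\cap (x,y)_P$ are not among the hypothesized fibers, so your induction does not close there. The repair is that these factors need no fiber theorem at all: since $f$ is rank-preserving, $p<y$ forces $f(p)\leq f(y)$, so $P_{<y}$ and $(x,y)_P$ lie inside the single fiber $F=f^{-1}\left(\langle f(y)\rangle\right)$; moreover $F_{>y}=\emptyset$ (an element above $y$ has rank larger than $\rank(f(y))$, hence cannot map into $\langle f(y)\rangle$), so $\Delta(P_{<y})$ is precisely the link of the vertex $y$ in $\Delta(F)$ and is therefore homotopy Cohen--Macaulay, and $(x,y)_P=(P_{<y})_{>x}$ is an open principal upper order ideal of that poset, hence homotopy Cohen--Macaulay by the same folklore fact the paper uses in the proof of Theorem \ref{th:PFT-2HCM-b}. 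Only the top factor $P_{>x_k}$ genuinely requires the restricted map $P_{>x_k}\to Q_{>f(x_k)}$ and induction on $\rank(Q)$, and there the two descents must be proved rather than assumed: surjectivity (each fiber $f^{-1}\left(\langle q\rangle\right)$ is an order ideal of $P$ and pure, so all of its maximal elements map to $q$, and $x_k$ lies below one of them) and the fiber condition (the fiber of the restriction over $q$ is the open principal upper order ideal $f^{-1}\left(\langle q\rangle\right)_{>x_k}$ of a homotopy Cohen--Macaulay poset, hence homotopy Cohen--Macaulay), together with gradedness of $P_{>x_k}$ and of $Q_{>f(x_k)}$, which follows from purity of $\Delta(P)$ and $\Delta(Q)$. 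With these corrections --- which is essentially how the analogous steps are carried out in the proof of Theorem \ref{th:PFT-2HCM-b} --- your outline becomes a complete proof; as written, it is not.
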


\subsection{Poset fiber theorems for doubly homotopy Cohen-Macaulay posets} \label{subsec:2HCM}

Before providing the proof of Theorem \ref{th:PFT-2HCM-b}, 
we recall the notion of doubly homotopy Cohen-Macaulay posets.

\begin{definition}
A poset $P$ is called \emph{doubly homotopy Cohen-Macaulay} if $P$ is homotopy Cohen-Macaulay and 
if for every $x\in P$ the poset $P-\{x\}$ is homotopy Cohen-Macaulay of the same rank as $P$. 
\end{definition}

We will use the following result which is implied by Remark 2.6 and Corollary 3.2 in \cite{BWW}.

\begin{corollary}\label{cor:connected}
Let $P$ and $Q$ be graded posets of rank $n$.
Let $f:P\to Q$ be a surjective rank-preserving poset map such that for all $q\in Q$ 
the order complex $\Delta(Q_{>q})$ is $(n-\rank(q)-2)$-connected and
for all non-minimal $q\in Q$ the inclusion map 
\begin{equation*}
\Delta\left(f^{-1}(Q_{<q})\right)\hookrightarrow \Delta\left(f^{-1}\left(\langle q\rangle\right)\right)
\end{equation*}
is homotopic to a constant map which sends $\Delta\left(f^{-1}(Q_{<q})\right)$ to $c_q$ for some 
$c_q\in \Delta\left(f^{-1}\left(\langle q\rangle\right)\right)$.
Then $\Delta(P)$ is $(n-1)$-connected, if and only if $Q$ is $(n-1)$-connected.
\end{corollary}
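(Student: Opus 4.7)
The plan is to deduce Corollary~\ref{cor:connected} directly by combining Remark~2.6 and Corollary~3.2 of~\cite{BWW}. Together these form a Quillen-type poset fiber theorem for top-dimensional connectivity, and what remains is to verify that our hypotheses match those required in~\cite{BWW} and to read off the precise connectivity bound that they yield.

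First, I would recall the statement of~\cite[Corollary~3.2]{BWW}: for a surjective rank-preserving poset map $f:P\to Q$ between graded posets of the same rank $n$, appropriate fiber conditions force the top-dimensional connectivity of $\Delta(P)$ to agree with that of $\Delta(Q)$. The underlying mechanism is the well-known description of $\Delta(P)$ as the homotopy colimit of a diagram of spaces indexed by $Q$, where each $q\in Q$ contributes the fiber complex $\Delta(f^{-1}(\langle q\rangle))$. Under this description, connectivity of $\Delta(P)$ is controlled simultaneously by the connectivity of $\Delta(Q)$ and by the connectivity of each individual fiber, together with how these fibers are glued along the intervals $\Delta(f^{-1}(Q_{<q}))$.

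Next, I would verify that our two hypotheses deliver exactly the assumptions of that framework. The assumption that $\Delta(Q_{>q})$ is $(n-\rank(q)-2)$-connected for every $q\in Q$ is the standard top-dimensional local-connectivity hypothesis on the base $Q$ needed to run the diagram-of-spaces argument. The null-homotopy condition on the inclusion $\Delta(f^{-1}(Q_{<q}))\hookrightarrow \Delta(f^{-1}(\langle q\rangle))$ is, by~\cite[Remark~2.6]{BWW}, precisely the fiber-contractibility hypothesis in the form required by~\cite[Corollary~3.2]{BWW}: it permits one to replace each fiber over the already-assembled lower part of the diagram by a cone, collapsing the fiber contribution to a point and reducing the connectivity analysis of $\Delta(P)$ to that of $\Delta(Q)$. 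With these identifications in place, \cite[Corollary~3.2]{BWW} yields the two-way implication that $\Delta(P)$ is $(n-1)$-connected if and only if $\Delta(Q)$ is.

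The main obstacle I anticipate is the careful dimensional bookkeeping needed to guarantee that the hypothesis $(n-\rank(q)-2)$ aligns exactly with the bound demanded in~\cite[Corollary~3.2]{BWW}, so that the conclusion emerges at precisely the top connectivity level $n-1$ on both sides with no off-by-one loss. Since the order complex $\Delta(Q_{>q})$ has top dimension $n-\rank(q)-1$, our bound is one below the top; this must be paired correctly with the fiber-side null-homotopy, which is governing the remaining $\rank(q)$-dimensional portion of the diagram. The verification that these two local contributions combine through the diagram-of-spaces argument to give connectivity of exactly $\Delta(P)$ in degree $n-1$ is the single genuinely non-bookkeeping point one must extract from~\cite{BWW}, and is the step to which I would pay the closest attention.
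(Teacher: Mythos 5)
Your proposal matches the paper exactly: the paper gives no independent proof of this corollary, stating only that it is implied by Remark~2.6 together with Corollary~3.2 of the Bj\"orner--Wachs--Welker poset fiber theorems paper, which is precisely the combination you invoke. Your verification of the hypotheses (the null-homotopy of the fiber inclusions standing in, via Remark~2.6, for the fiber-connectivity assumption, and the $(n-\rank(q)-2)$-connectivity of $\Delta(Q_{>q})$ ensuring the wedge summands do not affect connectivity in degree $n-1$) is exactly the bookkeeping the citation leaves implicit.
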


\noindent{\bf{Proof of Theorem \ref{th:PFT-2HCM-b}}.} 
It directly follows from Theorem \ref{Quillen} that the poset $P$ is homotopy Cohen-Macaulay and 
hence so is the interval $I=(u,v)$. 
Let $\widetilde{I}$ denote the poset $I-\{x\}$ and let $k$ be its rank.  
We need to verify that all links of faces $F\in\Delta(\widetilde{I})$ are 
$(\dim(\lk_{\Delta(\widetilde{I})}(F))-1)$-connected. 
The arguments we use are similar to those employed in the proof of \cite[Theorem 5.1 (i)]{BWW}. 
  
We first show that $\Delta(\widetilde{I})=\lk_{\Delta(\widetilde{I})}(\emptyset)$ is $(k-1)$-connected. For this aim, we use 
Corollary \ref{cor:connected}.

Let $\widetilde{f}:\widetilde{I} \rightarrow f(I)-\{q_0\}$ denote the restriction of $f$ to $\widetilde{I}$. 
This map is well-defined, since $f^{-1}(q_0)=\{x\}$, and it is a surjective poset 
map, because $f$ is. Since $f$ is rank-preserving and since $\widetilde{I}$ is graded by hypothesis, we deduce 
that $\widetilde{f}$ is rank-preserving. 
We set $\widetilde{J}=f(I)-\{q_0\}$ and by assumption we know that $\widetilde{J}$ is homotopy Cohen-Macaulay.
In the following, consider $q\in \widetilde{J}$.
Since $\Delta(\widetilde{J}_{>q})$ is the link of a face of $\Delta(\widetilde{J})$, 
we infer from the above that 
$\Delta(\widetilde{J}_{>q})$ is $(\rank(\widetilde{J}_{>q})-1)=(\rank(f(v))-\rank(q)-3)$-connected. This shows one of the conditions  
of Corollary \ref{cor:connected} we need to verify. 
By assumption on $f$, the fiber $f^{-1}\left(\langle q\rangle\right)$ is homotopy Cohen-Macaulay 
and therefore it is $(\rank(q)-1)$-connected. 
As in the proof of Theorem 1.1 in \cite{BWW}, it follows that there exists a homotopy 
from the inclusion map 
$\Delta(f^{-1}(Q_{<q}))\hookrightarrow \Delta(f^{-1}\left(\langle q\rangle\right))$
to the constant map which sends $\Delta(f^{-1}(Q_{<q}))$ to $c_q\in \Delta(f^{-1}\left(\langle q\rangle\right))$. 
We can choose $c_q\in \Delta(\widetilde{f}^{-1}\left(\langle q\rangle\right))\subseteq \widetilde{I}$. 
Then the above homotopy restricts to a homotopy from 
$\Delta(\widetilde{f}^{-1}(\widetilde{J}_{<q}))\hookrightarrow \Delta(\widetilde{f}^{-1}\left(\langle q\rangle\right))$
to the constant map which sends $\Delta(\widetilde{f}^{-1}(\widetilde{J}_{<q}))$ to $c_q$. Thus, 
$\Delta(\widetilde{f}^{-1}(\widetilde{J}_{<q}))\hookrightarrow \Delta(\widetilde{f}^{-1}\left(\langle q\rangle\right))$ is
homotopic to a constant map. Finally, we can apply the Corollary aforementioned. Since, by homotopy Cohen-Macaulayness, $\widetilde{J}$ 
is $(k-1)$-connected, it follows that $\widetilde{I}$ is $(k-1)$-connected as well. 

It remains to show that all links of proper faces $F\neq \emptyset$ of $\Delta(\widetilde{I})$ 
are $(\dim(\lk_{\Delta(\widetilde{I})}(F))-1)$-connected.
Since the join of an $s$-connected and an $r$-connected complex is $(r+s-2)$-connected, 
it suffices to check open intervals and principal upper and 
lower order ideals (see \emph{e.g.}, \cite{BWW2}).

Let $(a,b)$ be an open interval in $\widetilde{I}$. Note that $(a,b)_P=(a,b)_{I}$. 
If $x\notin (a,b)_P$, then $(a,b)_{I}$ and $(a,b)_{\widetilde{I}}$ coincide.
Since $I$ is homotopy Cohen-Macaulay, it follows that $(a,b)_{\widetilde{I}}$ is $(\rank(b)-\rank(a)-3)$-connected.
Now let $a<x<b$ and let $c=f(b)$, \emph{i.e.}, $b\in f^{-1}(c)$. 
From $b\neq v$, we infer that $b\in I$ and thus $b\in f^{-1}(c)\cap I$. 
Moreover, we have $c>q_0$ and by condition (ii) of the theorem, it follows that $[u,b]_P-\{x\}$ 
is homotopy Cohen-Macaulay. 
Since $(a,b)_{\widetilde{I}}=(a,b)_P-\{x\}$ is the link of a face of $[u,b]_P-\{x\}$, we conclude  
that $(a,b)_{\widetilde{I}}$ is $(\rank(b)-\rank(a)-3)$-connected.
The same reasoning shows that open principal lower order ideals $\widetilde{I}_{<p}$ of $\widetilde{I}$ are $(\rank(p)-\rank(u)-3)$-connected.

Next, we show that for all $p\in \widetilde{I}$ the open principal upper order ideal $\widetilde{I}_{>p}=(p,v)_P-\{x\}$ is 
$(\rank(v)-\rank(p)-3)$-connected. 
If $p\nless x$, then $(p,v)_P-\{x\}=(p,v)_P$, and the claim follows, because $P$ is homotopy Cohen-Macaulay.
Let now $p < x$. 
We consider the restriction of $f$ to $P_{\geq p}$. To avoid confusion, 
let $\bar{f}: P_{\geq p}\to Q_{\geq f(p)}$ denote this restriction. 
We show that the map $\bar{f}$ is a surjective rank-preserving poset map, satisfying all assumptions of the theorem for the interval $(p,v)$ 
and the element $x\in(p,v)$. 
Since, due to $u<p$, we have $\rank([p,v]_P-\{x\})<\rank([u,v]_P-\{x\})$, we can then deduce by induction on the rank of the considered interval 
that $(p,v)_P-\{x\}=\widetilde{I}_{>p}$ is homotopy Cohen-Macaulay. 
In particular, we obtain that $\widetilde{I}_{>p}$ is $(\rank(v)-\rank(p)-3)$-connected. 
For the verification of the assumptions, first note that $Q_{\geq f(p)}$ is homotopy Cohen-Macaulay because $Q$ is. 
Clearly, $x\in (p,v)_P\subsetneq P_{\geq p}$. Since $\tilde{I}$ is graded by assumption, the same is true for $(p,v)_P-\{x\}$. 
Furthermore, $f$ is a rank-preserving poset map, thus so is $\bar{f}$. 
To see that $\bar{f}$ is surjective, let $q\in Q_{\geq f(p)}$. 
Since $f$ is rank-preserving and surjective and $f^{-1}\left(\langle q\rangle\right)$ 
is graded, all maximal
elements of $f^{-1}\left(\langle q\rangle\right)$ are mapped to $q$ and one of these has to be greater than $p$. 
Hence, $\bar{f}$ is surjective.
For condition (i), note that for $q\in Q_{\geq f(p)}$ the fiber $\bar{f}^{-1}\left(\langle q\rangle\right)$ equals 
$f^{-1}\left(\langle q\rangle\right)\cap P_{\geq p}$. 
Thus, it is a closed principal upper order ideal of the homotopy 
Cohen-Macaulay poset 
$f^{-1}\left(\langle q\rangle\right)$ and as such homotopy Cohen-Macaulay. 

It remains to verify condition (ii). 
Since $x>p$, we have $f(x)=q_0\in Q_{\geq f(p)}$ and we obtain that $\bar{f}^{-1}(q_0)=\{x\}$. 
In addition, it holds that 
$\bar{f}((p,v)_P)-\{q_0\}=\left(f(I)-\{q_0\}\right)\cap (f(p),f(v))_Q$. Thus, $\bar{f}((p,v)_P)-\{q_0\}$ 
is an open principal upper order ideal of the homotopy Cohen-Macaulay poset $f(I)-\{q_0\}$ and as such homotopy Cohen-Macaulay. 

Now let $q>q_0$ and let $\bar{p}\in \bar{f}^{-1}(q)\cap (p,v)_P$. 
The poset $[p,\bar{p}]_P-\{x\}$ is a closed interval of 
$[u,\bar{p}]_P-\{x\}$. Since by hypothesis the latter one is homotopy Cohen-Macaulay, so is $[p,\bar{p}]_P-\{x\}$.  
Finally, it follows by induction that 
$\widetilde{I}_{>p}=(p,v)_P-\{x\}$ is homotopy Cohen-Macaulay. 
This finishes the first part of the proof. 
The statement concerning double homotopy Cohen-Macaulayness follows directly from the definition of 
this property and the first part of the theorem. 
\qed

\smallskip

\noindent{\bf{Proof of Corollary \ref{th:PFT-2HCM}}.} 
Let $\hat{P}=P\cup\{\hat{0}_P,\hat{1}_P\}$ and 
$\hat{Q}=Q\cup\{\hat{0}_Q,\hat{1}_Q\}$ denote the posets 
obtained from $P$ and $Q$, respectively, by adding a minimum and a maximum element. 
Since $P$ is graded, so is $\hat{P}$.  
Similarly, $\hat{Q}$ is homotopy Cohen-Macaulay, since $Q$ is.  
We consider the map $\hat{f}:\hat{P}\to\hat{Q}$ that extends $f$ by setting $\hat{f}(\hat{0}_P)=\hat{0}_Q$ 
and $\hat{f}(\hat{1}_P)=\hat{1}_Q$. 
It follows from the properties of $f$ that $\hat{f}$ is a surjective rank-preserving poset map, such that for 
$q\in \hat{Q}-\{\hat{1}_Q\}$ the fibers $\hat{f}^{-1}\left(\langle q\rangle\right)$ are homotopy Cohen-Macaulay.  
Theorem \ref{Quillen} further implies that $P$ and thus also the fiber 
$\hat{f}^{-1}\left(\langle\hat{1}_Q\rangle\right)=\hat{P}$ is homotopy Cohen-Macaulay. 
The result follows by applying Theorem \ref{th:PFT-2HCM-b} to the 
posets $\hat{P}$, $\hat{Q}$, the map $\hat{f}$ and the interval $(\hat{0}_P,\hat{1}_P)$.
\qed

\smallskip

It seems natural to ask whether a more general version of Corollary \ref{th:PFT-2HCM} holds for $k$-homotopy Cohen-Macaulay 
posets where $k\geq 2$. Recall that a poset $P$ is called \emph{$k$-homotopy Cohen-Macaulay} if $P$ is homotopy Cohen-Macaulay and 
if for every $A\subseteq P$ with $|A|\leq k-1$ the poset $P-A$ is homotopy Cohen-Macaulay and of the same rank as $P$. 
For $k$-homotopy Cohen-Macaulay posets we obtain the following generalization of Corollary \ref{th:PFT-2HCM}. 

\begin{proposition} \label{th:PFTk-HCM} 
Let $P$ be a graded poset without a minimum and a maximum element and let $\{x_1,\ldots,x_{k-1}\}$ be a 
$(k-1)$-element subset of $P$. 
Assume that for all $A\subseteq \{x_1,\ldots,x_{k-1}\}$ the poset $P-A$ is graded and that $Q$ is a homotopy Cohen-Macaulay poset.
Let further $f:P\to Q$ be a surjective rank-preserving  poset map which satisfies the following conditions: 
\begin{enumerate}
\item [(i)] For every $q\in Q$ the fiber $f^{-1}\left(\langle q\rangle\right)$ is homotopy Cohen-Macaulay. 
\item [(ii)] There exist $q_1,\ldots,q_{k-1}\in Q$ such that 
\begin{itemize}
\item $f^{-1}(q_i)=\{x_i\}$ for all $1\leq i\leq k-1$ and  for all $S\subseteq \{q_1,\ldots,q_{k-1}\}$ the poset 
$Q-S$ is homotopy Cohen-Macaulay, and
\item for all $S\subseteq \{q_1,\ldots,q_{k-1}\}$ and $q\in \bigcap_{v\in S} Q_{>v}$ and $p\in f^{-1}(q)$ the 
poset $\langle p\rangle -f^{-1}(S)$ is homotopy Cohen-Macaulay.
\end{itemize}
\end{enumerate}
Then $P-\{x_1,\ldots,x_{k-1}\}$ is homotopy Cohen-Macaulay as well. 
If for all $A\subseteq P$ with $|A|=k-1$ there exists a map satisfying the above conditions and if $\rank(P-A)=\rank(P)$, 
then $P$ is $k$-homotopy Cohen-Macaulay.
\end{proposition}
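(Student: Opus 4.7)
I would proceed by induction on $k$, with base case $k=2$ being precisely Corollary \ref{th:PFT-2HCM}. For the inductive step, assuming the proposition for $k-1$, my plan is to remove the elements $x_1,\ldots,x_{k-1}$ one at a time. Set $P_j:=P-\{x_1,\ldots,x_j\}$, $Q_j:=Q-\{q_1,\ldots,q_j\}$, and $f_j:=f|_{P_j}$, and prove by induction on $j\le k-1$ that $P_j$ is homotopy Cohen-Macaulay of rank $\rank(P)$. The base $j=0$ is Theorem \ref{Quillen}; the step $j-1\to j$ would follow from an application of Corollary \ref{th:PFT-2HCM} to $f_{j-1}\colon P_{j-1}\to Q_{j-1}$ with distinguished pair $(x_j,q_j)$.

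Most hypotheses of Corollary \ref{th:PFT-2HCM} transfer routinely. Surjectivity and rank-preservation of $f_{j-1}$ are immediate from $f^{-1}(q_i)=\{x_i\}$ and the inductive assumption $\rank(P_{j-1})=\rank(P)$. The homotopy Cohen-Macaulayness of $Q_{j-1}$ and of $Q_j=Q_{j-1}-\{q_j\}$ are exactly condition (ii) of the proposition with $S=\{q_1,\ldots,q_{j-1}\}$ and $S=\{q_1,\ldots,q_j\}$ respectively. For the last clause of condition (ii) of Corollary \ref{th:PFT-2HCM}, I need $\langle p\rangle_{P_{j-1}}-\{x_j\}$ to be homotopy Cohen-Macaulay for every $p\in f_{j-1}^{-1}(q)$ with $q>q_j$; setting $S:=\{q_i:x_i\le p,\ 1\le i\le j\}$ makes this set coincide with $\langle p\rangle_P-f^{-1}(S)$, and the requirement $q\in\bigcap_{v\in S}Q_{>v}$ of the proposition follows because $q>q_j$ together with $q\in Q_{j-1}$ forces $q>q_i$ for every $q_i\in S$.

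The main obstacle is verifying condition (i) of Corollary \ref{th:PFT-2HCM} for $f_{j-1}$: that for every $q\in Q_{j-1}$ the fiber
\[
f_{j-1}^{-1}\bigl(\langle q\rangle_{Q_{j-1}}\bigr)\;=\;f^{-1}\bigl(\langle q\rangle_Q\bigr)-\{x_i\,:\,q_i\le q,\ i<j\}
\]
is homotopy Cohen-Macaulay. My plan is to handle this by a nested induction on the number of $x_i$'s already deleted: starting from the fiber $f^{-1}(\langle q\rangle_Q)$, which is homotopy Cohen-Macaulay by hypothesis (i) of the proposition, I would remove the offending $x_i$'s one at a time by applying Theorem \ref{th:PFT-2HCM-b} to the restriction $g:=f|_{f^{-1}(\langle q\rangle_Q)}\colon f^{-1}(\langle q\rangle_Q)\to\langle q\rangle_Q$ after augmenting source and target by formal minimum and maximum elements. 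At each such inner step, the hypotheses of Theorem \ref{th:PFT-2HCM-b} once more reduce to the principal-ideal content of hypothesis (ii) of the proposition, now restricted to subsets $S\subseteq\{q_1,\ldots,q_{k-1}\}$ whose elements are bounded above by $q$; this is where the strength of formulating (ii) uniformly in all $S$ becomes essential.

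Finally, the $k$-homotopy Cohen-Macaulay conclusion of the proposition is immediate from the first part and the definition of the property, once one notes that the argument above works for every choice of $(k-1)$-element subset $\{x_1,\ldots,x_{k-1}\}\subseteq P$ admitting such a map.
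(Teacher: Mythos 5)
Your argument is correct in outline, but it takes a genuinely different route from the paper's. The paper gives no written proof at all: it declares that Proposition \ref{th:PFTk-HCM} ``follows exactly the same steps'' as Theorem \ref{th:PFT-2HCM-b}, i.e.\ one is meant to rerun the whole connectivity argument (Corollary \ref{cor:connected}, the case analysis of open intervals and principal upper and lower order ideals, induction on rank) with the entire set $\{x_1,\ldots,x_{k-1}\}$ removed at once. You instead bootstrap from the already established case $k=2$: peel off the $x_j$ one at a time by applying Corollary \ref{th:PFT-2HCM} to the restrictions $f_{j-1}\colon P_{j-1}\to Q_{j-1}$, and dispose of the fiber condition by a nested induction in which Theorem \ref{th:PFT-2HCM-b} is applied to the restricted maps $f^{-1}(\langle q\rangle)\to\langle q\rangle$ with the offending $x_i$ removed one by one. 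Your bookkeeping---identifying $\langle p\rangle_{P_{j-1}}-\{x_j\}$ with $\langle p\rangle_P-f^{-1}(S)$ and checking $q\in\bigcap_{v\in S}Q_{>v}$---is exactly where the uniform-in-$S$ form of hypothesis (ii) is needed, and it is right. What your route buys is that the two $k=2$ results are used as black boxes, so none of the link-connectivity analysis is repeated; what the paper's route buys is that it never has to verify the hypotheses of those theorems for intermediate, partially reduced objects.

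That last point is the one caveat you should make explicit. Theorem \ref{th:PFT-2HCM-b} and Corollary \ref{th:PFT-2HCM} carry side hypotheses (gradedness of $I-\{x\}$, rank-preservation of the map, and, in the Corollary, absence of a minimum and maximum in the source) which, for your intermediate objects, are not literally supplied by the hypotheses of Proposition \ref{th:PFTk-HCM}. The outer steps are covered by the assumption that $P-A$ is graded for every $A\subseteq\{x_1,\ldots,x_{k-1}\}$ (together with the tacit fact that removing the $x_i$ does not shift the rank function), but the inner steps additionally need, e.g., that $f^{-1}(\langle q\rangle)-\{x_i : i\in T\}$ is graded of the expected rank, and that $P_{j-1}$ has no minimum or maximum so that the Corollary is applicable (otherwise apply Theorem \ref{th:PFT-2HCM-b} to suitable augmentations, as you already do for the fibers). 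These are the same kind of tacit gradedness and rank statements the paper itself glosses over inside the proof of Theorem \ref{th:PFT-2HCM-b}, so they do not invalidate your approach, but a complete write-up should either add them as hypotheses or verify them along the way. Finally, where you need $\langle q\rangle_Q$ with some of the $q_i$ deleted to be homotopy Cohen-Macaulay, note explicitly that this poset equals the principal order ideal generated by $q$ in $Q-S$, and hence is homotopy Cohen-Macaulay because $Q-S$ is and principal ideals inherit the property.
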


We omit the proof of Theorem \ref{th:PFTk-HCM} since it follows exactly the same steps as the one of Theorem \ref{th:PFT-2HCM-b} 
and does not provide any additional insight.

\subsection{A poset fiber theorem for strongly constructible posets}
Strongly constructible posets were introduced in \cite{ca} in order to prove
that the absolute order on the symmetric group $S_n$ is homotopy Cohen-Macaulay.
We first recall the definition of a strongly constructible poset. 

\begin{definition}
A graded poset $P$ of rank $n$ with a minimum element is \emph{strongly constructible} if either
\begin{itemize}
 \item[(i)] $P$ is bounded and pure shellable, or
\item[(ii)] $P$ can be written as a union of two strongly constructible proper ideals $J_1$, $J_2$ of rank $n$ such that the intersection 
$J_1\cap J_2$ is a strongly constructible poset of rank at least $n-1$. 
\end{itemize}
\end{definition}

Strongly constructible and homotopy Cohen-Macaulay posets are related in the following way.

\begin{lemma}\emph{\cite[Corollary 3.3, Proposition 3.6]{ca}} \label{lem:ConstrCM}
Let $P$ be a strongly constructible poset. Then $P$ is homotopy Cohen-Macaulay.
\end{lemma}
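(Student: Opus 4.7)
The plan is to argue by induction on $|P|$, following the recursive structure of the definition of strong constructibility. The base case is clause (i): if $P$ is bounded and pure shellable, then by the classical implication \emph{shellable $\Rightarrow$ homotopy Cohen-Macaulay} for pure simplicial complexes (see \cite{bjo2,mwa}), $P$ is homotopy Cohen-Macaulay. This already disposes of all strongly constructible posets that never need to be split.

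For the inductive step we are in clause (ii): $P$ of rank $n$ decomposes as $P = J_1 \cup J_2$ with $J_1, J_2$ strongly constructible proper ideals of rank $n$, and $J_1 \cap J_2$ strongly constructible of rank at least $n-1$. By induction, $\Delta(J_1)$, $\Delta(J_2)$ and $\Delta(J_1 \cap J_2)$ are all homotopy Cohen-Macaulay of the stated dimensions. I would first show that $\Delta(P) = \Delta(J_1) \cup \Delta(J_2)$ is $(n-1)$-connected by a Mayer-Vietoris / Seifert-van Kampen pushout argument applied to the identity $\Delta(J_1) \cap \Delta(J_2) = \Delta(J_1 \cap J_2)$: the pushout long exact sequence, combined with $(n-1)$-connectivity of the two pieces and $(n-2)$-connectivity of their intersection, forces $\Delta(P)$ to be $(n-1)$-connected. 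Next, I would verify that every proper link of $\Delta(P)$ has the required connectivity by invoking the classical join decomposition
\begin{equation*}
\lk_{\Delta(P)}(\{x_1 < \cdots < x_k\}) = \Delta(P_{<x_1}) * \Delta((x_1,x_2)) * \cdots * \Delta((x_{k-1},x_k)) * \Delta(P_{>x_k}),
\end{equation*}
together with the fact that the join of an $r$-connected and an $s$-connected complex is $(r+s+2)$-connected. The task thereby reduces to showing that every open principal lower ideal, every open interval, and every open principal upper filter of $P$ is homotopy Cohen-Macaulay of the correct rank.

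The hardest step is precisely this reduction: I must show that these subposets inherit (or can be replaced by objects that inherit) strong constructibility so that the induction closes on them. The natural approach is to intersect the cover $P = J_1 \cup J_2$ with the subposet in question and check that the resulting cover is again of the form required by clause (ii), with the right ranks. The delicate point is that when one intersects with a principal lower ideal or an open interval, one of the pieces may drop in rank, and the intersection of the two pieces may drop by more than one, breaking the recursion. To circumvent this I would prove the slightly stronger inductive statement that $P$ is homotopy Cohen-Macaulay \emph{and} every closed lower interval $\langle x\rangle_P$ is itself strongly constructible of rank $\rank(x)$; this extra invariant travels well through the gluing step and, together with a dual argument for upper filters, should close the induction. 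Handling the borderline case $\rank(J_1 \cap J_2) = n-1$ in the Mayer-Vietoris step, where the connectivity bound is tight, is the other place that requires careful attention.
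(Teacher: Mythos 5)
First, a caveat: the paper does not prove this lemma at all; it is imported from \cite{ca} (Corollary 3.3 and Proposition 3.6 there), so your attempt has to be measured against the argument of that reference. Your skeleton matches it: induction along the recursive definition, base case settled by ``bounded and pure shellable $\Rightarrow$ homotopy Cohen-Macaulay'', and for the union step the key identity $\Delta(P)=\Delta(J_1)\cup\Delta(J_2)$ with $\Delta(J_1)\cap\Delta(J_2)=\Delta(J_1\cap J_2)$ (valid because $J_1,J_2$ are order ideals covering $P$, so every chain lies entirely in one of them). The Mayer--Vietoris/van Kampen argument for the global $(n-1)$-connectivity is also fine.

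The genuine gap is exactly the step you flag as hardest, and the device you propose does not close it. Reducing the link condition via the join decomposition leaves you with open lower ideals, open intervals and open principal upper filters of $P$. Your strengthened invariant (closed principal lower ideals are strongly constructible) does in fact propagate: a principal ideal of $P=J_1\cup J_2$ is a principal ideal of whichever $J_i$ contains its top element, and in the base case intervals of a bounded pure shellable poset are again shellable since links of pure shellable complexes are shellable; this takes care of lower ideals and open intervals. But the upper filters $P_{>x}$ are not covered, and the promised ``dual argument'' is not available: strong constructibility is not a self-dual notion (it presupposes a minimum and is built from order ideals), and $(J_i)_{>x}$ is a filter, so the strong-constructibility recursion does not restart there. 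One still has $\Delta(P_{>x})=\Delta((J_1)_{>x})\cup\Delta((J_2)_{>x})$ with intersection $\Delta((J_1\cap J_2)_{>x})$, and the three pieces are homotopy Cohen-Macaulay of the right dimensions (being join factors of links of homotopy Cohen-Macaulay posets), but they carry no strongly constructible structure; what is needed at this point is the general gluing lemma: if $\Delta_1,\Delta_2$ are homotopy Cohen-Macaulay of dimension $d$ and $\Delta_1\cap\Delta_2$ is homotopy Cohen-Macaulay of dimension at least $d-1$, then $\Delta_1\cup\Delta_2$ is homotopy Cohen-Macaulay. This is precisely how \cite{ca} proceeds: that lemma is proved once and for all by induction on dimension, using $\lk_{\Delta_1\cup\Delta_2}(F)=\lk_{\Delta_1}(F)\cup\lk_{\Delta_2}(F)$ and $\lk_{\Delta_1}(F)\cap\lk_{\Delta_2}(F)=\lk_{\Delta_1\cap\Delta_2}(F)$ together with the connectivity estimate for unions you already have, and then the present lemma is an immediate induction on the definition of strong constructibility. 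Your write-up instead asserts that the strengthened invariant ``should close the induction''; for the filters it cannot, so as it stands the central inductive engine of the proof is missing, and the honest repair is to prove (or quote) the gluing lemma at the level of complexes rather than posets.
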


Moreover, as the following theorem shows, strongly constructible posets satisfy an analogue of Quillen's poset fiber theorem \cite[Corollary 9.7]{Q}.

\begin{theorem}\label{th:StrgConstrFiberTheorem}
Let $P$ and $Q$ be graded posets. Let further $f:P\rightarrow Q$ be a surjective rank-preserving poset map.
Assume that for every $q\in Q$ the fiber $f^{-1}\left(\langle q\rangle\right)$ is strongly constructible. If $Q$ is strongly constructible, then so is $P$.
\end{theorem}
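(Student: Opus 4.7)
\noindent\textbf{Proof proposal for Theorem \ref{th:StrgConstrFiberTheorem}.} The plan is to induct on the recursive structure of $Q$ as a strongly constructible poset, or equivalently on $|Q|$. As preliminary setup, note that surjectivity and rank-preservation of $f$ give $\rank(P)=\rank(Q)=:n$, and that the minimum $\hat{0}_Q$ of $Q$ pulls back to the fiber $f^{-1}(\hat{0}_Q)=f^{-1}(\langle\hat{0}_Q\rangle)$, which is strongly constructible by hypothesis; since every element of this fiber has rank $0$ in $P$ and is hence minimal, the fiber is an antichain that has a minimum, so it is a singleton, confirming that $P$ has a unique minimum. The base case of the induction is then easy: if $Q$ is bounded and pure shellable then $\langle\hat{1}_Q\rangle=Q$, so $P=f^{-1}(\langle\hat{1}_Q\rangle)$ is strongly constructible directly by hypothesis (i).

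For the inductive step, write $Q=J_1\cup J_2$ with $J_1,J_2$ strongly constructible proper order ideals of rank $n$ and $J_1\cap J_2$ strongly constructible of rank at least $n-1$. Set $P_i:=f^{-1}(J_i)$ for $i=1,2$ and $P_{12}:=f^{-1}(J_1\cap J_2)$. Since preimages of order ideals under poset maps are again order ideals, $P_1$, $P_2$ and $P_{12}$ are order ideals of $P$. Surjectivity of $f$ yields $P=P_1\cup P_2$ and $P_1\cap P_2=P_{12}$, and that the $P_i$ are proper in $P$. Rank-preservation gives $\rank(P_i)=\rank(J_i)=n$ and $\rank(P_{12})=\rank(J_1\cap J_2)\geq n-1$. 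To finish I would apply the inductive hypothesis to the restrictions $f|_{P_i}\colon P_i\to J_i$ and $f|_{P_{12}}\colon P_{12}\to J_1\cap J_2$, which are again surjective rank-preserving poset maps; the crucial observation is that for any $q$ in $J_i$ (respectively in $J_1\cap J_2$) the principal lower ideal $\langle q\rangle$ computed inside $J_i$ (resp.\ $J_1\cap J_2$) coincides with $\langle q\rangle_Q$, since $J_i$ (resp.\ $J_1\cap J_2$) is an order ideal of $Q$. Hence the fibers of the restricted maps equal the corresponding fibers of $f$ and are strongly constructible by hypothesis. The inductive hypothesis now gives that $P_1$, $P_2$ and $P_{12}$ are strongly constructible, exhibiting $P$ as in clause (ii) of the definition.

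I expect the main obstacle to be purely bookkeeping: verifying that the hypotheses of the theorem are inherited by the restrictions $f|_{P_i}$ and $f|_{P_{12}}$ so that the inductive hypothesis applies. The key fact unlocking this is the invariance of the principal lower ideal $\langle q\rangle$ under passage to an order ideal containing $q$; everything else --- properness of the $P_i$, the rank bounds, and the decomposition $P=P_1\cup P_2$ --- follows routinely from surjectivity and rank-preservation of $f$.
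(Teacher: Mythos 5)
Your proposal is correct and follows essentially the same route as the paper's proof: establish that $P$ has a unique minimum via the fiber over $\hat{0}_Q$, handle the bounded case of $Q$ directly from the fiber hypothesis, and otherwise pull back the decomposition $Q=J_1\cup J_2$ to $P=f^{-1}(J_1)\cup f^{-1}(J_2)$, applying induction to the restricted maps whose fibers coincide with those of $f$. The only (inessential) difference is that you induct on the size/structure of $Q$ while the paper inducts on $|P|$.
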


\begin{proof}
We proceed by induction on the cardinality of $P$. 
If $Q$ is bounded, then $Q=\langle q \rangle$ for some $q\in Q$. In this case, $P=f^{-1}\left(\langle q\rangle\right)$, which 
by hypothesis is strongly constructible. 
Now assume that $Q$ is unbounded and let $\rank(Q)=n$. 
Let $\hat{0}_Q$ be the minimum of $Q$. Since $f$ is rank-preserving, the elements of the fiber $f^{-1}(\hat{0}_Q)$ are the minimal elements of $P$. 
Strong constructibility of $f^{-1}(\hat{0}_Q)$ further implies that $f^{-1}(\hat{0}_Q)$ contains exactly one element, which shows that $P$
has a minimum.
Since $Q$ is strongly constructible, we can write it as $Q=J_1\cup J_2$, 
where $J_1$ and $J_2$ are strongly constructible proper ideals of rank $n$ 
and $J_1\cap J_2$ is strongly constructible of rank at least $n-1$.
Clearly, $P=f^{-1}(Q)=f^{-1}(J_1\cup J_2)=f^{-1}(J_1)\cup f^{-1}(J_2)$. 
Let $f_1,f_2$ and $f_{12}$ denote the restrictions of $f$ to the sets $f^{-1}(J_1)$, $f^{-1}(J_2)$ and $f^{-1}(J_1\cap J_2)$, respectively. 
Each one of these restrictions is a surjective rank-preserving poset map (as $f$ is) and for 
all $q_1\in J_1$, $q_2\in J_2$ and $q_{12}\in J_1\cap J_2$ the fibers 
$f_1^{-1}\left(\langle q_1\rangle\right)$, $f_2^{-1}\left(\langle q_2\rangle\right)$ 
and $f_{12}^{-1}\left(\langle q_{12}\rangle\right)$ are equal 
to $f^{-1}\left(\langle q_1\rangle\right)$, $f^{-1}\left(\langle q_2\rangle\right)$ 
and $f^{-1}\left(\langle q_{12}\rangle\right)$, respectively. 
For this reason they are strongly constructible. 
Thus, it follows by induction that the posets $f^{-1}(J_1)$, $f^{-1}(J_2)$ and $f^{-1}(J_1\cap J_2)=f^{-1}(J_1)\cap f^{-1}(J_2)$ are strongly constructible. 
Since $f$ is a rank-preserving poset map, we infer that $f^{-1}(J_1)$ and $f^{-1}(J_2)$ are order ideals 
of $P$ of rank $n$ and that their intersection is an order ideal of the same rank as $J_1\cap J_2$ which is at least $n-1$. 
\end{proof}

Since the poset of injective words $\I_n$ has been shown to be shellable \cite[Theorem 6.1.(i)]{BW2}, 
it is in particular strongly constructible. Using Theorem \ref{th:StrgConstrFiberTheorem} we can give a direct 
proof of this statement. Moreover this proof will be used in that of Theorem \ref{th:injective2Constructible}.

\begin{example}\label{th:InjectiveWordsConstr}
The poset of injective words $\I_n$ is strongly constructible.
\end{example}

In order to show that $\I_n$ is strongly constructible 
we proceed by induction on $n$. The result is straightforward to verify if 
$n\leq 2$. 
So as to apply Theorem \ref{th:StrgConstrFiberTheorem} we need to define an appropriate map. 
For every $w\in \I_n$, let $\pi(w)$ denote the word obtained from $w$ by 
deleting the letter $n$, if $n\leq w$. Otherwise, we set $\pi(w)=w$. 
\textit{E.g.}, if $n=5$, then $\pi(12534)=1234$ and $\pi(341)=341$. 
Obviously, $\pi(\I_n)=\I_{n-1}$. 
We define the map $f:\I_n\to \I_{n-1}\times\{\hat{0},\hat{1}\}$
by letting
\[f(w)=\left\{\begin{array}{ll}

(\pi(w),\,\hat{0}), & \mbox{if $n\not\leq w$}, \\
(\pi(w),\,\hat{1}), & \mbox{if $n\leq w$}

\end{array}
\right.\]
for $w\in \I_n$.
By definition, $f$ is a rank-preserving map. We show that $f$ is a poset map and surjective. 
Let $u,v\in \I_n$ with $u\leq v$. Suppose first that $n\not\leq v$. Then, we also have $n\not\leq u$, thus $f(u)=(\pi(u),\hat{0})=(u,\hat{0})$ and 
$f(v)=(\pi(v),\hat{0})=(v,\hat{0})$. It follows that $f(u)\leq f(v)$. Suppose now that $n\leq v$. Then, $f(v)=(\pi(v),\hat{1})$ and $f(u)$ is either equal to $(\pi(u),\hat{0})$ or to $(\pi(u), \hat{1})$.  
Since $\pi(u)\leq \pi(v)$ and $\hat{0}< \hat{1}$, in both cases it holds that $f(u)\leq f(v)$. Altogether, this proves that $f$ is a poset map. 
Let $w\in \I_{n-1}$. Then, $f^{-1}\left((w,\hat{0})\right)=\{w\}$ and every word obtained from $w$ by inserting the letter $n$ into some position 
of $w$ lies in $f^{-1}\left((w,\hat{1})\right)$, which means that $f$ is surjective. 
In order to show strong constructibility of the fibers, we will employ the following description of $f^{-1}\left(\langle q\rangle\right)$.

\noindent{\bf{Claim:}}
For every $q\in \I_{n-1}\times \{\hat{0},\hat{1}\}$ we have $f^{-1}\left(\langle q\rangle\right)=\langle f^{-1}(q)\rangle$. 

The claim is obvious if $q=(w,\hat{0})\in \I_{n-1}\times\{\hat{0},\hat{1}\}$. 
Suppose now that $q=(w,\hat{1})$.  
Since $f$ is a poset map, we have $\langle f^{-1}(q)\rangle\subseteq f^{-1}\left(\langle q\rangle\right)$. 
For the reverse inclusion consider any element $u\in f^{-1}\left(\langle q\rangle\right)$. Then, $f(u)\leq q$ and hence $\pi(u)\leq w$. 
If $n\not\leq u$, then $\pi(u)=u$ and therefore $u\leq w \leq w'$ for every $w'\in f^{-1}\left((w,\hat{1})\right)$. 
This implies that $u\in \langle f^{-1}(q)\rangle$. 
If $n\leq u$, then $u$ is obtained from $\pi(u)$ by inserting the letter $n$ in some place. Let $\pi(u)=u_1\cdots u_k$, 
where the letters $u_i$ are distinct elements of $[n-1]$. Without loss of generality we can assume that $u=n\,u_1\cdots u_k$. 
Since $\pi(u)\leq w$, we can find a word $w'\in f^{-1}\left((w,\hat{1})\right)$ such that the letter $n$ directly precedes 
the letter $u_1$ in $w'$. By construction we obtain $u\leq w'$ and thus, $u\in \langle f^{-1}(q)\rangle$. The claim follows. 

Let $q\in \I_{n-1}\times\{\hat{0},\hat{1}\}$. By the above claim, we know that the fiber 
$f^{-1}\left(\langle q\rangle\right)$ is strongly constructible if and only if the order ideal 
$\langle f^{-1}(q)\rangle$ is so.  
If $q=(w,\hat{0})$ for some $w\in \I_{n-1}$, then it holds that 
$\langle f^{-1}(q)\rangle=\langle w\rangle$, \emph{i.e.}, the fiber is a closed interval in $\I_n$. As such 
it is shellable (see Section \ref{subsect:Injective}) and in particular strongly constructible.

Now suppose that $q=(w,\hat{1})$. 
Without loss of generality, we may assume that $w=123\cdots k$, for some $k\leq n-1$. 
Then, $\langle f^{-1}(q)\rangle=\bigcup_{i=0}^{k} \langle 12\cdots i\,n\,i+1\cdots k\rangle$. 
For every $i\in \{0,1,\dots, k\}$, the ideal $S_i:=\langle 12\cdots i\,n\,i+1\cdots k\rangle$ is shellable and 
therefore strongly constructible and $\rank(S_i)=k+1$. 
We show by induction on $j$ that the union $\bigcup_{i=0}^{j} S_i$ is strongly constructible and of rank $k+1$. 
As, by the induction hypothesis, $S_j$ and $\bigcup_{i=0}^{j-1} S_i$ are strongly constructible of rank $k+1$, 
it suffices to show that $S_j\cap \left(\bigcup_{i=0}^{j-1} S_i\right)$ is strongly constructible of rank $k$. 
We have 
\[S_j\cap \left(\bigcup_{i=0}^{j-1} S_i\right)=
\langle  12\cdots k\rangle\cup\langle12\cdots j-1\,n\,j+1\cdots k\rangle.\] 
Both ideals, $\langle  12\cdots k\rangle$ and $\langle12\cdots j-1\,n\,j+1\cdots k\rangle$, are strongly 
constructible of rank $k$ and their intersection is equal to $\langle 12\cdots j-1\, j+1\cdots k\rangle$, 
which is a strongly constructible ideal of rank $k-1$. Therefore, $S_j\cap \left(\bigcup_{i=0}^{j-1} S_i\right)$ is 
strongly constructible of rank $k$ and so is $\bigcup_{i=0}^{j} S_i$, but of rank $k+1$. Conclusively, we have shown that for  
each $q\in \I_{n-1}\times \{\hat{0},\hat{1}\}$ the fiber $ f^{-1}\left(\langle q\rangle\right)$ is strongly constructible. 

By induction, we can assume that $\I_{n-1}$ is strongly constructible and it follows that 
the same is true for the direct product $\I_{n-1}\times\{\hat{0},\hat{1}\}$ (see \cite[Lemma 3.7]{ca}).
We can finally apply Theorem \ref{th:StrgConstrFiberTheorem} and thereby conclude that $\I_n$ is strongly constructible.
\qed

\section{Applications of Theorem \ref{th:PFT-2HCM-b}} \label{sect:Application2}

In this section we give an application of Theorem \ref{th:PFT-2HCM-b} to the lattices of non-crossing partitions of types $A$ and $B$. 
More precisely, we show that the proper part of these lattices is doubly homotopy Cohen-Macaulay. 
For our arguments to work, it will be crucial to reduce to the removal of elements which are fixed point free. 
As soon as this has been achieved, we are able to provide a proof of Theorem \ref{th:NC}, which is case-independent.

For the proofs of Theorem \ref{th:NC} and \ref{th:injective2Constructible} we will need the following technical result.

\begin{theorem}
\label{martina}
Let $P$ be a poset of rank $n$ with a minimum element. Let
$\widetilde{P}=\bar{P}$ if $P$ is bounded, and let $\widetilde{P}=P-\{\hat{0}_P\}$ if $P$ does not have a maximum. 
Assume that $\widetilde{P}$ is doubly homotopy Cohen-Macaulay.
Then, for every $x\in \widetilde{P}$ the poset $(P\times \{\hat{0},\hat{1}\})-\{(x,\hat{0})\}$ is homotopy Cohen-Macaulay of rank $n+1$. 
\end{theorem}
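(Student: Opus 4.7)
The plan is to apply Theorem~\ref{th:PFT-2HCM-b} to a bounded enlargement of $P':=P\times\{\hat{0},\hat{1}\}$. If $P$ is bounded then $P'$ is already bounded; otherwise we adjoin a formal maximum $\hat{1}$ to $P'$ to obtain a bounded poset $\hat{P}'$. In both cases $\hat{P}'$ has minimum $(\hat{0}_P,\hat{0})$, and letting $I$ denote the open interval from $(\hat{0}_P,\hat{0})$ to the maximum of $\hat{P}'$, we have $(x,\hat{0})\in I$ since $x\in\widetilde{P}$. A standard coning argument reduces the required homotopy Cohen-Macaulayness of $P'-\{(x,\hat{0})\}$ of rank $n+1$ to that of $I-\{(x,\hat{0})\}$ of the appropriate smaller rank, using that passing to a (double) cone preserves homotopy Cohen-Macaulayness while adjusting the rank accordingly.

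The heart of the argument is the construction of a surjective rank-preserving poset map $f\colon\hat{P}'\to Q$, where $Q$ is a bounded homotopy Cohen-Macaulay poset whose proper part is essentially $\widetilde{P}$, together with an element $q_0\in Q$ satisfying $f^{-1}(q_0)=\{(x,\hat{0})\}$. With this map in hand, the verifications of the hypotheses of Theorem~\ref{th:PFT-2HCM-b} run as follows. First, each fiber $f^{-1}(\langle q\rangle)$ is a closed interval of $\hat{P}'$ whose structure mimics a product of the form $[\,\cdot\,,\,\cdot\,]_P\times\{\hat{0},\hat{1}\}$; these are shellable as products of shellable posets, hence homotopy Cohen-Macaulay. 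Second, the poset $f(I)-\{q_0\}$ is identified (up to a join with points) with $\widetilde{P}-\{x\}$, which is homotopy Cohen-Macaulay by the doubly homotopy Cohen-Macaulay hypothesis on $\widetilde{P}$. Third, for $q>q_0$ and $p\in f^{-1}(q)\cap I$, the interval $[(\hat{0}_P,\hat{0}),p]-\{(x,\hat{0})\}$ inherits the structure of a smaller copy of $[\,\cdot\,,\,\cdot\,]_P\times\{\hat{0},\hat{1}\}$ with one atom removed, and its homotopy Cohen-Macaulayness is verified directly, either by exhibiting a shelling or by induction on the rank.

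The main obstacle, and where real work is required, is the construction of $f$ itself. The obvious projection $(p,\epsilon)\mapsto p$ fails to be rank-preserving, since the ranks of $(p,\hat{0})$ and $(p,\hat{1})$ in $\hat{P}'$ differ by one. Consequently, $Q$ must be chosen of rank at least that of $\hat{P}'$, and cannot simply be $\widetilde{P}$; a workable option is to take $Q$ to be a suitable ``doubling'' of $\widetilde{P}$ (for instance $\widetilde{P}\times\{\hat{0},\hat{1}\}$ with added extreme elements) and to define $f$ on the lower layer of $\hat{P}'$ naturally onto the lower layer of $Q$ and on the upper layer so as to preserve ranks. The two ``corner'' elements $(\hat{0}_P,\hat{1})$ and $(\hat{1}_P,\hat{0})$ of $\hat{P}'$ require special treatment in order to keep the critical condition $f^{-1}(q_0)=\{(x,\hat{0})\}$ valid. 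Once this combinatorial bookkeeping is in place, the remaining checks are a routine application of standard facts about products of shellable posets together with the doubly homotopy Cohen-Macaulay hypothesis on $\widetilde{P}$.
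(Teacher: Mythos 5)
Your proposal has a genuine gap at its acknowledged core: the map $f$ is never constructed, and the candidate you sketch cannot simultaneously satisfy the hypotheses of Theorem~\ref{th:PFT-2HCM-b} without circularity. As you note, the projection $(p,\epsilon)\mapsto p$ is not rank-preserving, so you propose taking $Q$ to be a ``doubling'' of $\widetilde{P}$ such as $\widetilde{P}\times\{\hat{0},\hat{1}\}$ with extremes adjoined. But then condition (ii) of the fiber theorem requires $f(I)-\{q_0\}$ to be homotopy Cohen-Macaulay, and with such a $Q$ and $q_0$ corresponding to $(x,\hat{0})$, the poset $f(I)-\{q_0\}$ is essentially $(P\times\{\hat{0},\hat{1}\})$ with the element over $(x,\hat{0})$ removed --- that is, precisely the statement you are trying to prove. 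Your claim that $f(I)-\{q_0\}$ ``is identified (up to a join with points) with $\widetilde{P}-\{x\}$'' is only plausible if $Q$ is essentially $P$ itself, which is exactly the case you ruled out for failure of rank-preservation; the two halves of your construction are incompatible. There is also a secondary unjustified step: you invoke shellability of the fibers ``as products of shellable posets,'' but no shellability of intervals of $P$ is assumed --- only that $\widetilde{P}$ is doubly homotopy Cohen-Macaulay --- so at best you could argue homotopy Cohen-Macaulayness of closed intervals and use preservation under products with a chain.

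The paper avoids the fiber theorem entirely here and argues directly. It writes
\[
(P\times \{\hat{0},\hat{1}\})-\{(x,\hat{0})\}=\bigl((P-\{x\})\times \{\hat{0},\hat{1}\}\bigr)\cup \bigl((P_{<x}\times\{\hat{0},\hat{1}\})\oplus \{(x,\hat{1})\}\oplus (P_{>x}\times\{\hat{1}\})\bigr),
\]
separating chains avoiding $(x,\hat{1})$ from those through it. The first piece is homotopy Cohen-Macaulay of rank $n+1$ because $P-\{x\}$ is (double homotopy Cohen-Macaulayness of $\widetilde{P}$) and products with a chain preserve the property; the second is homotopy Cohen-Macaulay because ordinal sums preserve it; their intersection $(P_{<x}\times\{\hat{0},\hat{1}\})\oplus (P_{>x}\times\{\hat{1}\})$ is a rank-selection of the second piece, hence homotopy Cohen-Macaulay of rank $n$; and a gluing lemma of Welker--Ziegler--\v{Z}ivaljevi\'c applied to the order complex and its links finishes the proof. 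If you want to salvage your approach you would need an explicit rank-preserving map to a strictly simpler target whose relevant deletion is known to be homotopy Cohen-Macaulay, and no such map is exhibited; as it stands, the decomposition-and-gluing route is both shorter and non-circular.
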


\begin{proof}
Let $x\in\widetilde{P}$ be an element of rank $r$. 
We write $(P\times \{\hat{0},\hat{1}\})-\{(x,\hat{0})\}$ in the following way:
\begin{equation}
 (P\times \{\hat{0},\hat{1}\})-\{(x,\hat{0})\}=\left((P-\{x\})\times \{\hat{0},\hat{1}\}\right)\cup \left((P_{<x}\times\{\hat{0},\hat{1}\})\oplus \{(x,\hat{1})\}\oplus (P_{>x}\times\{\hat{1}\})\right).
\label{eq:decompose}
\end{equation}
The first part of the right-hand side of Equation (\ref{eq:decompose}) accounts for all chains in $(P\times \{\hat{0},\hat{1}\})-\{(x,\hat{0})\}$ not 
containing $(x,\hat{1})$. All chains in $(P\times \{\hat{0},\hat{1}\})-\{(x,\hat{0})\}$ 
passing through $(x,\hat{1})$, are captured by the second part of the right-hand side of 
Equation (\ref{eq:decompose}).
In what follows, we show that those two posets are homotopy Cohen-Macaulay of rank $n+1$ and that so is their intersection of rank $n$.

From the double homotopy Cohen-Macaulayness of $x\in\widetilde{P}$ we infer that $P-\{x\}$ is homotopy Cohen-Macaulay 
of rank $n$. 
Corollary 3.8 in \cite{BWW2} implies that $(P-\{x\})\times \{\hat{0},\hat{1}\}$ is homotopy Cohen-Macaulay of rank $n+1$. 
This takes care of the first poset on the right-hand side of Equation (\ref{eq:decompose}). 

For the second one, note that, since $\tilde{P}$ and thus $P$ are homotopy Cohen-Macaulay, so are $P_{<x}$ and $P_{>x}$ and in particular 
$P_{>x}\times \{\hat{1}\}$. Hence, again by \cite[Corollary 3.8]{BWW2}, 
we deduce that $P_{<x}\times \{\hat{0},\hat{1}\}$ is homotopy Cohen-Macaulay of rank $r$. 
Moreover, since homotopy Cohen-Macaulayness is preserved under taking ordinal sums (see \cite[Corollary 3.4]{BWW2}) 
also $(P_{<x}\times\{\hat{0},\hat{1}\})\oplus \{(x,\hat{1})\}\oplus (P_{>x}\times\{\hat{1}\})$ is homotopy Cohen-Macaulay of rank $r+1+(n-r)=n+1$.

The intersection of the two posets which were considered until now is given as
\begin{equation*}
\left((P-\{x\})\times \{\hat{0},\hat{1}\}\right)\cap\left((P_{<x}\times\{\hat{0},\hat{1}\})\oplus \{(x,\hat{1})\}\oplus (P_{>x}\times\{\hat{1}\})\right)\\
=(P_{<x}\times\{\hat{0},\hat{1}\})\oplus (P_{>x}\times\{\hat{1}\}).
\end{equation*}

$(P_{<x}\times\{\hat{0},\hat{1}\})\oplus (P_{>x}\times\{\hat{1}\})$ is obtained from 
$(P_{<x}\times\{\hat{0},\hat{1}\})\oplus \{(x,\hat{1})\}\oplus (P_{>x}\times\{\hat{1}\})$ by deleting the element $(x,\hat{1})$. 
Combining the facts that rank-selection preserves homotopy Cohen-Macaulayness (see \textit{e.g.}, \cite{bjo1}) and that
$(x,\hat{1})$ is the only element of rank $r+1$ of $(P_{<x}\times\{\hat{0},\hat{1}\})\oplus \{(x,\hat{1})\}\oplus (P_{>x}\times\{\hat{1}\})$,  
we conclude that the intersection $(P_{<x}\times\{\hat{0},\hat{1}\})\oplus (P_{>x}\times\{\hat{1}\})$ is 
homotopy Cohen-Macaulay of rank $n$. 
Eventually, if one applies Lemma 4.9 from \cite{WZZ} to the order complex of 
$(P\times \{\hat{0},\hat{1}\})-\{(x,\hat{0})\}$ as well as to its links, one arrives at the conclusion that $(P\times \{\hat{0},\hat{1}\})-\{(x,\hat{0})\}$ 
is homotopy Cohen-Macaulay of rank $n+1$.
\end{proof}

\smallskip

In order to perform the reduction to the removal of fixed point free permutations, we will use the so-called \emph{Kreweras complement}. 
Given a finite reflection group $W$ and $\mu\in N(W)$,  
the map $K^{\mu}:\NC(W)\to \NC(W)$, which sends $w$ to $K(w)=w^{-1}\mu$, is called the \emph{Kreweras complement} on 
$[e,\mu]$. It was shown in \cite[Lemma 2.5.4]{arm} that this map is an anti-automorphism of the interval $[e,\mu]$, which in particular 
implies that $[e,\mu]$ is self-dual (see Corollary \ref{cor:selfDual}). If $c$ is a Coxeter element of $W$, we write $K$ instead of $K^c$. 
For $W=S_n$ and $W=B_n$, we use the Coxeter elements $c=(1\,2\cdots n)$ and $c=[1,2,\ldots,n]$, respectively.

Our reasoning will employ the following properties of the Kreweras complement $K$.

\begin{lemma}
\label{fixpoint}
Let $w$ be an element in $\NC^A(n)$ or in $\NC^B(n)$. Then:
\begin{itemize}
 \item[(i)] If $\rank(w)<\frac{n}{2}$, then $w$ has at least one fixed point.
\item[(ii)] If $w$ is fixed point free, then its image $K(w)$ has at least one fixed point.
\end{itemize}
\end{lemma}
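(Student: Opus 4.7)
The plan is to derive both parts from the rank formulas for absolute length in types $A$ and $B$ together with the identity $\rank(w) + \rank(K(w)) = \rank(c)$, which expresses that the Kreweras complement $K$ is an anti-automorphism of the interval $[e,c]$.

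For part (i) I would argue by contrapositive. Recall that the absolute length of $w \in S_n$ equals $n - c(w)$, where $c(w)$ is the number of cycles in the cycle decomposition of $w$, and that fixed points are exactly the $1$-cycles. If $w$ has no fixed point then every cycle has length at least $2$, so $c(w) \leq n/2$ and hence $\rank(w) \geq n/2$. For type $B$, the rank is $n - s(w)$, where $s(w)$ counts the paired cycles, and a fixed point of $w \in B_n$ corresponds precisely to a paired $1$-cycle $\lleft i \rright$; the balanced $1$-cycle $[i]$ sends $i$ to $-i$ and contributes no fixed point. Fixed-point-freeness then forces every paired cycle to have length at least $2$, and from $2s(w) \leq \sum_j p_j \leq n$ one again obtains $\rank(w) \geq n/2$.

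For part (ii) I would feed $K(w)$ into (i), using $\rank(c) = n-1$ in type $A$ and $\rank(c) = n$ in type $B$. In type $A$ the identity yields $\rank(K(w)) \leq (n-1) - n/2 = n/2 - 1 < n/2$, and (i) immediately delivers a fixed point of $K(w)$. In type $B$ the same computation gives only $\rank(K(w)) \leq n/2$, which is strictly less than $n/2$ whenever $n$ is odd, again reducing to (i).

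The main obstacle is the remaining boundary case, in which $n$ is even and $\rank(w) = \rank(K(w)) = n/2$. There $w$ must be a product of $n/2$ paired $2$-cycles with no balanced cycles, so $w$ is an involution and $K(w) = wc$; a fixed point of $K(w)$ exists if and only if $w$ and $c$ agree at some index, which by the central symmetry $w(-x) = -w(x)$ amounts to $w$ containing a pair of cyclically adjacent vertices of the $2n$-gon with boundary labels $1, 2, \dots, n, -1, -2, \dots, -n$. I would finish by invoking the standard fact that every non-crossing perfect matching of a convex polygon contains an edge between two cyclically adjacent vertices, proved by taking any matching edge and recursing on the side of it that carries fewer endpoints, applied to the non-crossing centrally symmetric matching associated to $w$.
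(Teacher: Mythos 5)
Your proof is correct, and while parts (i) and the rank computation in (ii) coincide with the paper's argument (the paper counts cycles directly rather than arguing by contrapositive, but the content is identical), your treatment of the critical boundary case in type $B$ ($n$ even, $\rank(w)=\rank(K(w))=n/2$) takes a genuinely different route. The paper stays inside the algebraic cycle-decomposition description: assuming $K(w)$ is fixed point free, it picks a paired transposition of $w$ with $b>|a|$ and $b-|a|$ minimal, computes $K(w)(b-1)=w(b)=a$, and derives a contradiction to minimality, separately for $a>0$ and $a<0$. You instead pass to the pictorial model of $\NC^B(n)$ as centrally symmetric non-crossing partitions of the $2n$-gon with boundary labels $1,\dots,n,-1,\dots,-n$, observe that $K(w)(x)=x$ is equivalent to $w(x)=c(x)$, i.e.\ to the matching determined by $w$ containing an edge between cyclically adjacent vertices, and conclude by the elementary fact that every non-crossing perfect matching of a convex polygon has such a ``short'' edge. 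Your argument is conceptually cleaner and makes the conclusion transparent, but it imports the correspondence between elements $w\leq_T c$ in $B_n$ and centrally symmetric non-crossing diagrams (due to Reiner and Brady--Watt, available through the paper's references but not developed in the text), and in a polished write-up you should cite that correspondence explicitly and note that fixed-point-freeness plus $\rank(w)=n/2$ forces $w$ to be a product of exactly $n/2$ disjoint paired $2$-cycles, so the associated diagram really is a perfect matching; the paper's extremal argument, by contrast, is self-contained within the machinery it sets up, at the cost of a less illuminating case analysis.
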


\begin{proof}
Throughout the proof, we treat $\NC^A(n)$ and $\NC^B(n)$ separately. 

\noindent\emph{Proof of (i).} 
Let $w\in\NC^A(n)$ and let $s$ be the number of cycles in the cycle 
decomposition of $w$. Assume that $n$ is even, \emph{i.e.}, $n=2k$ for 
some positive integer $k$. If $\rank(w)<\frac{n}{2}=k$, then it follows from Section \ref{subsect:NCA} 
that $s\geq n-(k-1)=k+1$. Therefore, $w$ must have at least $k+1$ disjoint cycles in its cycle decomposition. Since 
$2 (k+1)=n+2>n$, we deduce that at least one of those cycles has to be a $1$-cycle, \emph{i.e.}, $w$ has a fixed point. 
The proof for odd $n$ uses the same arguments and is therefore omitted.

We proceed to $\NC^B(n)$. Let $w\in\NC^B(n)$ and let $s$ be the number of paired cycles in the cycle 
decomposition of $w$. Assume that $n$ is even, \emph{i.e.}, $n=2k$ for 
some positive integer $k$. If $\rank(w)<\frac{n}{2}=k$, then it follows from Section \ref{subsect:NCB} 
that $s\geq n-(k-1)=k+1$. This implies that $w$ has at least $k+1$ disjoint paired cycles in its cycle decomposition. Since 
$2(k+1)=n+2>n$, we deduce that at least one of those has to be a paired $1$-cycle, \emph{i.e.}, $w$ has a fixed point. 
The proof for odd $n$ relies on the same reasoning and is therefore left out.

\smallskip

\noindent\emph{Proof of (ii).} 
Let $w\in \NC^A(n)$ be fixed point free. It follows from (i) that we must have $\rank(w)\geq \frac{n}{2}$. 
Since $K$ is an anti-automorphism, we further obtain
\begin{equation*}
\rank(K(w))=(n-1)-\rank(w)\leq
\begin{cases}
n-1-\frac{n}{2}=\frac{n}{2}-1<\frac{n}{2},\quad\mbox{ if } n \mbox{ is even}\\
n-1-\frac{n+1}{2}=\frac{n-3}{2}<\frac{n}{2}, \quad\mbox{if } n \mbox{ is odd.} 
\end{cases}
\end{equation*}
Once more by (i) we infer that $K(w)$ has a fixed point.

It remains to handle the case of $\NC^B(n)$. 
Let $w\in\NC^B(n)$ be an element without a fixed point.
By (i) we know that $\rank(w)\geq\frac{n}{2}$. 
Since $K$ is an anti-automorphism, we further obtain
\begin{equation*}
\rank(K(w))=n-\rank(w)\leq
\begin{cases}
n-\frac{n}{2}=\frac{n}{2},\qquad\qquad\quad\mbox{if } n \mbox{ is even}\\
n-\frac{n+1}{2}=\frac{n-1}{2}<\frac{n}{2}, \quad\mbox{if } n \mbox{ is odd.} 
\end{cases}
\end{equation*}
If $n$ is odd, then (i) implies that $K(w)$ has a fixed point. 
Assume that $n$ is even, \emph{i.e.}, $n=2k$ for some positive integer $k$.
Then $w$ is at least of rank $k$. If $\rank(w)>k$, then the same computation as before shows that 
$\rank(K(w))<k=\frac{n}{2}$ and by (i) this means that $K(w)$ has a fixed point.
Finally, let $\rank(w)=k$. Then, we also have $\rank(K(w))=k$. 
Moreover, there must exist exactly $k$ 
disjoint paired cycles in the cycle decomposition of $w$. Since $w$ is fixed point free, it even follows 
that $w$ is a product of disjoint (paired) transpositions. 
In this case, the Kreweras complement can be computed as $K(w)=wc$.
If, in the cycle decomposition of $w$, there exists a cycle of the form $\lleft a,a+1\rright$ with $n>a>0$, then 
$K(w)(a)=wc(a)=w(a+1)=a$, \emph{i.e.}, $K(w)$ has a fixed point. 
If not, then let $\lleft a,b\rright$ be a transposition occurring in the cycle decomposition of $w$ such that $b>0$, $b>|a|$ 
and such that $b-|a|$ is minimal. 
We need to show that $K(w)$ has at least 
one fixed point. Suppose, by contradiction, that $K(w)$ is fixed point free. 
Since $\rank(K(w))=k$, it follows that $K(w)$ is a product of disjoint paired transpositions. 
$b>|a|$ implies that $b\geq 2$ and $|a|<n$. 
Thus, $K(w)(b-1)=wc(b-1)=w(b)=a$ and $\lleft a,b-1\rright$ has to be a cycle of $K(w)$. 
If $a>0$, we can further conclude that $b-1=K(w)(a)=wc(a)=w(a+1)$. Hence, $\lleft b-1,a+1\rright$ has to be one of the paired 
transpositions in the cycle decomposition of $w$. Since $a>0$ and $a\neq b-1$ by assumption, it holds that $b\geq a+2$, 
\emph{i.e.}, $b-1\geq a+1$. Moreover, we have 
$b-1-|a+1|=b-a-2<b-a$, which contradicts the minimality assumption on $\lleft a,b\rright$. Therefore, $K(w)$ needs to have a fixed point. 
If $a<0$, then similar arguments as in the previous case show that $\lleft b-1,a-1\rright$ occurs in the cycle decomposition of $w$ 
and this again yields a contradiction. 
This finishes the proof.
\end{proof}

Finally, we can proceed to the proof of Theorem \ref{th:NC}.

\smallskip

\noindent{\bf{Proof of Theorem \ref{th:NC}}.} 
For every $n\geq 3$, let $\mathcal{J}_n$ denote the order ideal of $\Abs(S_n)$ or $\Abs(B_n)$, 
which is generated by the Coxeter elements of $S_n$ and $B_n$, respectively. Similarly, let $P_n$ be the lattice of non-crossing 
partitions of type A and B, respectively. 
Let $u\in P_n$ for some $n$ be a permutation of rank $s$. We show by induction on $s$ that open intervals $(e, u)$ 
are doubly homotopy Cohen-Macaulay. 
For $s=2$ the result is trivial. Now let $s\geq 3$. 
Without loss of generality, we can assume that $u(n)\neq n$. 
It follows from \cite[Theorem 1.1]{ABW} and \cite[Proposition 2.6.11]{arm} that $\langle u\rangle$ is shellable, 
hence $(e,u)$ is homotopy Cohen-Macaulay. 
We need to show that for every $x\in (e,u)$
the poset $(e,u)-\{x\}$ is homotopy Cohen-Macaulay of rank $s-2$. 
By Lemma \ref{fixpoint} and using that $K^u$ is an anti-automorphism of $\langle u\rangle$, we may assume that $x$ has a fixed point
We can even presume that $x(n)=n$. 
We consider the following map from \cite[Section 4]{Myrto}. 
For every $w\in \mathcal{J}_n$ 
let $\pi(w)$ be the permutation obtained from $w$ by deleting $n$ from its cycle decomposition.
We define $g:\mathcal{J}_n\rightarrow \mathcal{J}_{n-1}\times\{\hat{0},\hat{1}\}$ 
by letting
\[g(w)=\left\{\begin{array}{ll}

(\pi(w),\,\hat{0}), & \mbox{if $w(n)=n$} \\
(\pi(w),\,\hat{1}), & \mbox{if $w(n)\neq n$}

\end{array}
\right.\]
for $w\in \mathcal{J}_n$.

Our goal is to apply Theorem \ref{th:PFT-2HCM-b} to the map $g$, the interval $(e,u)$ and $x\in (e,u)$. 
In \cite[Section 4]{Myrto} it is shown that $g$ is a surjective rank-preserving poset map, whose fibers are homotopy Cohen-Macaulay. 
We note that $(e,u)-\{x\}$ is graded. We consider the element $q_0=(x,\hat{0})\in g((e,u))$. 
By definition, $g^{-1}(q_0)=\{x\}$. 
Moreover, from $u(n)\neq n$, we derive that the permutation $\pi(u)$ is of rank $s-1$ and 
by induction, the open interval $(e, \pi(u))$ of $\mathcal{J}_{n-1}$ is doubly homotopy Cohen-Macaulay. 
It follows from Theorem \ref{martina} that the poset $[e,\pi(u))\times \{\hat{0},\hat{1}\}-\{q_0\}$ is 
homotopy Cohen-Macaulay. Since $(e,\hat{0})$ is the minimum of this poset, we conclude that 
$g((e,u))-\{q_0\}=[e,\pi(u))\times \{\hat{0},\hat{1}\}-\{(e,\hat{0}),q_0\}$ is homotopy Cohen-Macaulay. 
It remains to verify the second part of condition (ii) of Theorem \ref{th:PFT-2HCM-b}. 
Let $q\in g((e,u))$ such that $q>q_0$ and let $p\in g^{-1}(q)\cap (e,u)$. We need to show that 
$[e,p]-\{x\}$ is homotopy Cohen-Macaulay. 
Since the rank of $p$ is at most $s-1$, the induction hypothesis implies that 
$(e, p)$ is doubly homotopy Cohen-Macaulay. In particular, $[e,p] -\{x\}$ is homotopy Cohen-Macaulay. 
Finally, we can apply \ref{th:PFT-2HCM-b} which yields that $(e,u)-\{x\}$ is homotopy Cohen-Macaulay. 
From $\rank((e,u)-\{x\})=\rank((e,u))$ 
we deduce that $(e,u)$ is doubly homotopy Cohen-Macaulay. 
This finishes the proof since the proper part of each non-crossing partition lattice $P_n$ is isomorphic 
to an interval in $P_{n+1}$ of the form $(e,u)$.
\qed

\smallskip

We want to remark that double homotopy Cohen-Macaulayness of the non-crossing partition lattice $\NC^A(n)$ 
can also be concluded by combining \cite[Theorem 6.3]{H} and \cite[Theorem 3.3]{W}. 
In \cite{H} it is shown that the lattice of non-crossing partitions of type A is supersolvable and 
in \cite{W} it is proved that a supersolvable lattice is doubly homotopy Cohen-Macaulay, 
if and only if, the M\"obius function computed in any of its interval is non-zero. 
However, 
to the best of our knowledge, it is not known whether non-crossing partition lattices of other types are supersolvable.


\section{Applications of Corollary \ref{th:PFT-2HCM}} \label{sect:Application1}

In this section we provide the applications of Corollary \ref{th:PFT-2HCM} 
to the poset of injective words and the complexes of injective words, which were 
discussed in Sections \ref{subsect:Injective} and \ref{subsect:CompInj}, respectively. 
It was shown by Baclawski \cite[Corollary 4.3]{Baclawski} that the proper parts of geometric lattices 
are doubly homotopy Cohen-Macaulay. This in particular implies the following.

\begin{corollary}
\label{cor:Boolean1}
 The proper part of the Boolean algebra $\mathcal{B}_n$ is doubly homotopy Cohen-Macaulay.
\end{corollary}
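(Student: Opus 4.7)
The plan is to observe that this corollary is an immediate specialization of Baclawski's theorem on geometric lattices cited in the sentence preceding its statement. The first step I would carry out is to verify that the Boolean algebra $\mathcal{B}_n$, viewed as the lattice of all subsets of $[n]$ ordered by inclusion, is a geometric lattice. One natural way is to identify $\mathcal{B}_n$ with the lattice of flats of the free matroid on the ground set $[n]$, in which every subset is independent and closed. Alternatively, one can check directly that $\mathcal{B}_n$ is atomic, with atoms the singletons $\{1\},\ldots,\{n\}$, and that it is semimodular: its rank function is the cardinality, and this satisfies the modular equality $|A\cup B|+|A\cap B|=|A|+|B|$ for all $A,B\subseteq [n]$, from which semimodularity is automatic. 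Since $\mathcal{B}_n$ is also a finite lattice with minimum $\emptyset$ and maximum $[n]$, the notion of proper part $\bar{\mathcal{B}}_n=\mathcal{B}_n-\{\emptyset,[n]\}$ is meaningful.

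Having identified $\mathcal{B}_n$ as a geometric lattice, the conclusion follows at once from Baclawski's Corollary 4.3, applied to $\mathcal{B}_n$: the proper part $\bar{\mathcal{B}}_n$ is doubly homotopy Cohen-Macaulay. I do not foresee any genuine obstacle; the entire argument consists of applying a single cited theorem to the most basic example of a geometric lattice, and the only point worth spelling out is the matroid-theoretic identification. If one preferred a fully self-contained verification bypassing Baclawski, a short alternative would be to observe that $\Delta(\bar{\mathcal{B}}_n)$ is the barycentric subdivision of the boundary of the $(n-1)$-simplex, hence a shellable PL sphere of dimension $n-2$, and that the deletion of any vertex from this subdivision yields a shellable PL ball of the same dimension; since shellability implies homotopy Cohen-Macaulayness, this would give the claim directly. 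However, this detour is unnecessary given what has already been cited.
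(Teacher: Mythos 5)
Your proposal is correct and follows exactly the route of the paper, which deduces the corollary directly from Baclawski's result that proper parts of geometric lattices are doubly homotopy Cohen-Macaulay, using that $\mathcal{B}_n$ is a geometric lattice. The matroid-theoretic identification you spell out (and the optional barycentric-subdivision detour) is just a more explicit version of the same one-line specialization the paper intends.
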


In the proof of Theorem \ref{th:injective2Constructible}, we will have to distinguish the two cases if the 
element which is removed is maximal or not. 
The following simple lemma takes care of the first case.

\begin{lemma}
\label{lem:maxconst}
Let $P$ be a strongly constructible poset of rank $n$ and let $x\in P$ be a maximal element such that $P-\{x\}$ is graded of rank $n$. 
Then, the poset $P-\{x\}$ is strongly constructible of rank $n$.
\end{lemma}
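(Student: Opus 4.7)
The plan is to proceed by induction on $|P|$; the base case $|P|\le 2$ is trivial. For the inductive step, the hypothesis $\rank(P-\{x\})=n$ forces $P$ to have a maximal element distinct from $x$ (any rank-$n$ element of $P-\{x\}$ qualifies), so $P$ has multiple maxima and is therefore not bounded. Consequently, case (i) of the definition of strong constructibility cannot hold for $P$, so $P$ must satisfy case (ii): there exist strongly constructible proper ideals $J_1,J_2$ of $P$, both of rank $n$, with $P=J_1\cup J_2$ and $J_1\cap J_2$ strongly constructible of rank at least $n-1$. I assume without loss of generality that $x\in J_1$.

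I then distinguish two sub-cases. If $x$ is the unique maximal element of $J_1$, then $J_1=\langle x\rangle_P$, and the possibility $x\in J_2$ is excluded (it would force $J_2\supseteq J_1$, contradicting the properness of $J_2$). The grading of $P-\{x\}$ ensures that every element of $J_1-\{x\}$ lies below some maximal element of $P-\{x\}$, which perforce lies in $J_2\setminus J_1$; chasing this yields $J_1-\{x\}\subseteq J_2$, whence $P-\{x\}=J_2$, already strongly constructible of rank $n$. Otherwise, $J_1-\{x\}$, and analogously $J_2-\{x\}$ when $x\in J_2$, has rank $n$, and the inductive hypothesis will provide strong constructibility of $J_1-\{x\}$ (and $J_2-\{x\}$) once gradedness of rank $n$ is checked; the candidate decomposition of $P-\{x\}$ is then $(J_1-\{x\})\cup J_2$ or $(J_1-\{x\})\cup(J_2-\{x\})$, with corresponding intersection $J_1\cap J_2$ or $(J_1\cap J_2)-\{x\}$. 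In the latter situation the induction applies to $J_1\cap J_2$ unless $J_1\cap J_2=\langle x\rangle$ is bounded pure shellable; in that exceptional sub-sub-case, the classical fact that the link of the maximum in a pure shellable bounded poset is itself pure shellable shows that $[\hat{0},x)$ is pure shellable, and hence strongly constructible, of rank $n-1$, which suffices since $n-1\ge n-1$.

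The main technical obstacle is verifying that $J_1-\{x\}$ is graded of rank $n$, i.e.\ that no rank-$(n-1)$ element of $J_1$ has $x$ as its unique cover in $J_1$. The grading of $P-\{x\}$ supplies an alternative cover in $P$, but a priori this cover may lie outside $J_1$. The delicate part of the argument will be to exploit the flexibility in choosing the decomposition $P=J_1\cup J_2$, or to refine it so that any such problematic rank-$(n-1)$ element is pushed into the intersection $J_1\cap J_2$ rather than being left as an obstruction to the gradedness of the truncated ideals. This is where the bulk of the case-analysis will concentrate and where the inductive scheme has to be calibrated carefully.
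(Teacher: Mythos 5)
Your skeleton is the paper's: from $\rank(P-\{x\})=n$ you conclude that $x$ is not the unique maximal element, hence $P$ is unbounded and must decompose as $P=J_1\cup J_2$ via case (ii) of the definition, and your treatment of the sub-case $J_1=\langle x\rangle$ (then $x\notin J_2$; every element of $J_1-\{x\}$ lies below a rank-$n$ element of $P-\{x\}$, which cannot lie in $\langle x\rangle$ and so lies in $J_2$; hence $J_1-\{x\}\subseteq J_2$ and $P-\{x\}=J_2$) is exactly the argument given in the paper. The problem is that the rest is only a plan. The step you yourself call ``the main technical obstacle'' --- ensuring that $J_1-\{x\}$ (and, when $x\in J_1\cap J_2$, also $J_2-\{x\}$ and $(J_1\cap J_2)-\{x\}$) is graded of the appropriate rank so that your induction hypothesis applies, or else re-choosing/refining the decomposition to avoid the offending rank-$(n-1)$ elements --- is precisely the reduction that the paper performs with its single invocation of induction (``using induction, we may assume that $J_1=\langle x\rangle$''), and it is the heart of the lemma: the whole proof consists of this reduction plus the principal-ideal case you did complete. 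Deferring it to an unspecified ``carefully calibrated'' case analysis leaves a genuine gap; as written, your induction does not close. Note also that the paper's organization makes your auxiliary decompositions $(J_1-\{x\})\cup J_2$ and $(J_1-\{x\})\cup(J_2-\{x\})$ unnecessary: once the ideal containing $x$ is principal, $P-\{x\}$ is identified with $J_2$ outright, with no new decomposition of $P-\{x\}$ ever being exhibited.

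A secondary point: in your exceptional branch you assert that $[\hat{0},x)=\langle x\rangle-\{x\}$ is pure shellable (via the link of the maximum) and ``hence strongly constructible.'' With the definition used in this paper that inference is not automatic: case (i) of the definition requires the poset to be bounded, and $[\hat{0},x)$ is bounded only when $x$ covers a single element. So even this branch needs either a citation for an implication of the form ``pure shellable with a minimum implies strongly constructible'' or an explicit decomposition of $[\hat{0},x)$ into principal ideals generated by the coatoms of $\langle x\rangle$, neither of which you provide.
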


\begin{proof}
Let $x$ be a maximal element of $P$ such that $P-\{x\}$ is graded of rank $n$. Then, 
$x$ cannot be the unique maximal element of $P$. Since $P$ is strongly constructible and -- by the last argument -- not bounded, 
there are proper ideals of $P$, say $J_1$ and $J_2$, 
which are strongly constructible of rank $n$ and 
their intersection $J_1\cap J_2$ is a strongly constructible ideal of rank at least $n-1$. 
Let $x\in J_1$ and $x\not\in J_2$. The case $x\in J_2$ can be treated similarly. 
Using induction, we may assume that $J_1=\langle x\rangle$. Since  $P-\{x\}$ is graded of rank $n$, 
it follows that every element which is covered by $x$ is also covered by at least one maximal element of $J_2$. 
Thus, $J_1-\{x\}\subseteq J_2$ and therefore $P-\{x\}=(J_1-\{x\})\cup J_2=J_2$, which by assumption is strongly constructible of rank $n$. 
\end{proof}

We can finally give the proof of our fourth main result Theorem \ref{th:injective2Constructible}, \emph{i.e.}, show that the 
propert part of the poset of injective words, $\bar{\I}_n$, is doubly homotopy Cohen-Macaulay. 

\smallskip

\noindent{\bf{Proof of Theorem \ref{th:injective2Constructible}.}}
Throughout the proof, we use $\tilde{\I}_n$ to denote $\I_n-\{\emptyset\}$. 
We proceed by induction on $n$.
If $n=2$, then $\tilde{\I}_2$ has two maximal elements (the words $12$ and $21$) and two elements ($1$ and $2$) 
of rank $1$. No matter which one of the elements $12$, $21$, $1$ or $2$ is removed from $\tilde{\I}_2$, 
the resulting poset is homotopy Cohen-Macaulay of rank $1$. Thus, $\tilde{\I}_2$ is doubly homotopy Cohen-Macaulay.

Now, let $n\geq 3$.
If $x$ is a maximal element of $\tilde{\I}_n$, then, from Example \ref{th:InjectiveWordsConstr} and Lemma \ref{lem:maxconst}, we infer 
that $\I_n-\{x\}$ is strongly constructible and therefore, by Lemma \ref{lem:ConstrCM}, 
$\tilde{\I}_n-\{x\}$ is homotopy Cohen-Macaulay (of rank $n-1$). 
Now, consider an element $x\in \tilde{\I}_n$ 
that is not maximal. Without loss of generality, 
we may assume that $x=12\cdots k$ for some $1\leq k\leq n-1$. 
We consider the restriction of the map $f$ to $\tilde{I}_n$, defined in Example \ref{th:InjectiveWordsConstr}. 
Note that  $f(\tilde{I}_n)= \left(I_{n-1}\times \{\hat{0},\hat{1}\}\right)-\{(\emptyset,\hat{0})\}$. 
Our aim is to apply Corollary \ref{th:PFT-2HCM} to this map. 
We have seen in Example \ref{th:InjectiveWordsConstr} that 
$f$ is a surjective rank-preserving poset map, whose fibers are strongly constructible, hence homotopy Cohen-Macaulay. Those properties 
still hold for the considered restriction. Clearly, $\tilde{\I}_n-\{x\}$ is 
a graded poset. 
Let $q_0=(x,\hat{0})$. Then, $f(x)=q_0$ and $f^{-1}(q_0)=\{x\}$ by definition of $f$. 
By induction, we may assume that $\tilde{\I}_{n-1}$ is doubly homotopy Cohen-Macaulay and it now follows from Theorem \ref{martina} that 
$\left(\I_{n-1}\times\{\hat{0},\hat{1}\}\right)- \{q_0\}$ is homotopy Cohen-Macaulay. 
Since $(\emptyset,\hat{0})$ is the minimum of this 
poset, we conclude that $\left(\I_{n-1}\times\{\hat{0},\hat{1}\}\right)- \{(\emptyset,\hat{0}),q_0\}$ is homotopy Cohen-Macaulay. 
Let $q\in \I_{n-1}\times\{\hat{0},\hat{1}\}-\{(\emptyset,\hat{0}\}$ such that $q> q_0$ and let $p\in f^{-1}(q)$. 
We need to show that the ideal $\langle p\rangle_{\tilde{\I}_n}-\{x\}$ is homotopy Cohen-Macaulay. 
We know from Section \ref{subsect:Injective} that $\langle p\rangle_{\I_n}$ is isomorphic to a Boolean algebra. 
Since $x<p$ (\textit{i.e.}, $x$ is not the maximal
element of $\langle p\rangle_{\I_n}$), we deduce from Corollary \ref{cor:Boolean1} that 
$\langle p\rangle_{\I_n}-\{x\}$ is homotopy Cohen-Macaulay. Hence, so is $\langle p\rangle_{\tilde{\I}_n}-\{x\}$. 
We can finally apply Corollary \ref{th:PFT-2HCM} which yields that $\tilde{\I}_n-\{x\}$ is 
homotopy Cohen-Macaulay. Using that $\rank(\tilde{\I}_n-\{x\})=\rank(\tilde{\I}_n)$ for any 
$x\in \tilde{\I}_n$, we conclude that $\tilde{\I}_n$ is doubly homotopy Cohen-Macaulay. 
\qed

\smallskip

The second application of Corollary \ref{th:PFT-2HCM} we give, is Theorem \ref{th:general} which 
shows that double homotopy Cohen-Macaulayness is preserved when passing from a 
simplicial complex $\Delta$ to a complex of injective words of the form $\Gamma(\Delta,P)$ or $\Gamma/G(\Delta)$.
This result extends Theorem 1.3 in \cite{JW}.

\smallskip

\noindent{\bf{Proof of Theorem \ref{th:general}.}}
We first prove (i). We need to show that for a vertex $v$ of $\Gamma(\Delta,P)$ the 
complex $\Gamma(\Delta,P)-\{v\}$ is homotopy Cohen-Macaulay of the same dimension as $\Gamma(\Delta,P)$. 
Let $f:\Gamma(\Delta,P)-\{\emptyset\}\rightarrow\Delta-\{\emptyset\}$ be the map defined by setting
$f(w_1\cdots w_s)=\{w_1,\ldots ,w_s\}$ for $w=w_1\cdots w_s\in \Gamma(\Delta,P)-\{\emptyset\}$. 
It is shown in \cite[Theorem 1.3]{JW},
that $f$ is a surjective rank-preserving poset map with the property that for a simplex $\sigma\in \Delta$ the fiber 
$f^{-1}\left(\langle \sigma\rangle \right)$ is homotopy Cohen-Macaulay. 
Let $v\in \Gamma(\Delta,P)$ be a vertex. As $v$ is just a single letter in $[n]$, 
we have that $f^{-1}\left(\{v\}\right)=\{v\}$. Since $v$ is also a vertex of $\Delta$ and $\Delta$ is doubly homotopy 
Cohen-Macaulay, we further know that $\Delta-\{v\}$ is homotopy Cohen-Macaulay and pure. Therefore, also $\Gamma(\Delta,P)-\{\emptyset,v\}$ 
is pure. 
It remains to verify the second part of condition (ii) of Corollary \ref{th:PFT-2HCM}. 
Let $\sigma\in \Delta$ with $\{v\}\subsetneq\sigma$ and let 
$\tau\in f^{-1}(\sigma)$. Since $\Gamma(\Delta,P)$ is a 
Boolean cell complex \cite{JW}, the ideal $\langle \tau\rangle$ has to be isomorphic to a Boolean algebra and we deduce from 
Corollary \ref{cor:Boolean1} that $\langle \tau \rangle -\{v\}$ is homotopy Cohen-Macaulay. We can apply Corollary \ref{th:PFT-2HCM} and 
thereby obtain that $\Gamma(\Delta,P)-\{v\}$ is homotopy Cohen-Macaulay. 
The claim now follows since $\dim \Delta=\dim(\Delta-\{v\})$ (by homotopy Cohen-Macaulayness of $\Delta$) and hence, 
$\dim\left(\Gamma(\Delta,P)-\{v\}\right)=\dim \Gamma(\Delta,P)$. 

We now show (ii).  We need to verify that for any vertex $v$ of $\Gamma/G(\Delta)$ the 
complex $\Gamma/G(\Delta)-\{v\}$ is homotopy Cohen-Macaulay of the same dimension as $\Gamma/G(\Delta)$. 
Let $f:\Gamma/G(\Delta)-\{\emptyset\}\rightarrow\Delta$ be the map which sends an equivalence class $[w_1\cdots w_s]$ to 
$f([w_1\cdots w_s])=\{w_1,\ldots,w_s\}\in \Delta$. As in (i), it follows from \cite[Theorem 1.3]{JW} that $f$ is a 
surjective rank-preserving poset map whose fibers $f^{-1}\left(\langle\sigma\rangle\right)$ are homotopy Cohen-Macaulay for 
$\sigma\in \Delta$. 
By the same reasoning as in (i), we deduce that for any vertex $v\in\Gamma/G(\Delta)$ it holds that 
$f^{-1}\left(\{v\}\right)=\{[v]\}$ and that $\Delta-\{v\}$ is homotopy Cohen-Macaulay. 
The last property, in particular implies that $\Delta-\{v\}$ is pure and therefore also $\Gamma/G(\Delta)-\{v\}$. 
In order to apply Corollary \ref{th:PFT-2HCM}, 
it remains to show the second part of condition (ii). For this aim, consider 
$\sigma\in \Delta$ such that $\{v\}\subsetneq \sigma$ and let $[\tau]\in f^{-1}(\sigma)$. 
Since $\Gamma/G(\Delta)$ is a Boolean cell complex \cite[Lemma 1.1]{JW}, we 
know that $\langle [\tau]\rangle$ is isomorphic to a Boolean algebra and Corollary \ref{cor:Boolean1} 
implies that $\langle \tau\rangle-\{[v]\}$ is homotopy Cohen-Macaulay. Corollary \ref{th:PFT-2HCM} yields that $\Gamma/G(\Delta)-\{[v]\}$ is 
homotopy Cohen-Macaulay. 
As in the proof of (i), the condition on the dimension follows from $\dim\Delta=\dim(\Delta-\{v\})$, which holds since $\Delta$ is 
doubly homotopy Cohen-Macaulay. 
\qed

\subsection*{Acknowledgments}
Myrto Kallipoliti was partially supported by Erwin Schr\"odinger International Institute for Mathematical Physics (ESI) 
through a Junior Research Fellowship and by the Austrian
Science Foundation (FWF) through grant Z130-N13. 
Martina Kubitzke was supported by the Austrian Science Foundation (FWF) through grant Y463-N13. 
We would like to thank Christos Athanasiadis for suggesting this problem and Volkmar Welker for 
pointing out his work with Jakob Jonsson about complexes of injective words. 
We are also grateful for their helpful and valuable comments. 
A summary of part of this work will be published in the Proceedings of FPSAC 2011.

\end{document}